\providecommand{\tabularnewline}{\\}
\numberwithin{equation}{section}
\numberwithin{figure}{section}
\theoremstyle{plain}
\newtheorem{thm}{\protect\theoremname}[section]
  \theoremstyle{definition}
  \newtheorem{problem}[thm]{\protect\problemname}
  \theoremstyle{plain}
  \newtheorem{conjecture}[thm]{\protect\conjecturename}
  \theoremstyle{plain}
  \newtheorem{lem}[thm]{\protect\lemmaname}
  \theoremstyle{definition}
  \newtheorem{defn}[thm]{\protect\definitionname}
  \theoremstyle{plain}
  \newtheorem{prop}[thm]{\protect\propositionname}
  \theoremstyle{plain}
  \newtheorem{fact}[thm]{\protect\factname}
  \providecommand{\conjecturename}{Conjecture}
  \providecommand{\definitionname}{Definition}
  \providecommand{\factname}{Fact}
  \providecommand{\lemmaname}{Lemma}
  \providecommand{\problemname}{Problem}
  \providecommand{\propositionname}{Proposition}
\providecommand{\theoremname}{Theorem}
\begin{document}
\global\long\def\Z{\mathbb{\mathbf{Z}}}
\global\long\def\Spec{\mathrm{Spec}}
\global\long\def\G{\mathbb{G}}
\global\long\def\X{\mathbb{X}}
\global\long\def\GL{\mathrm{GL}}
\global\long\def\O{\mathcal{O}}
\global\long\def\SL{\mathrm{SL}}
\global\long\def\C{\mathbf{C}}
\global\long\def\tr{\mathrm{tr}}
\global\long\def\Aut{\mathrm{Aut}}
\global\long\def\Out{\mathrm{\mathrm{Out}}}
\global\long\def\FF{\mathbb{F}}
\global\long\def\Q{\mathbf{Q}}
\global\long\def\R{\mathbf{R}}
\global\long\def\Perms{\mathrm{Perms}}
\global\long\def\Fib{\mathrm{Fib}}
\global\long\def\F{\mathbf{F}}

\global\long\def\PGL{\mathrm{PGL}}
\global\long\def\Y{\mathbb{Y}}
\global\long\def\Aut{\mathrm{Aut}}
\global\long\def\sign{\mathrm{sign}}
\global\long\def\PSL{\mathrm{PSL}}
\global\long\def\trace{\mathrm{trace}}

\title{The cycle structure of a Markoff automorphism over finite fields}

\thanks{M. Magee was supported in part by N.S.F. award DMS-1701357. All authors
were supported in part by Sam Payne's N.S.F. CAREER award DMS\textendash 1149054. }

\author{Alois Cerbu, Elijah Gunther, Michael Magee, Luke Peilen}
\begin{abstract}
We begin an investigation of the action of pseudo-Anosov elements
of $\Out(\F_{2})$ on the Markoff-type varieties 
\[
\X_{\kappa}:\:x^{2}+y^{2}+z^{2}=xyz+2+\kappa
\]
 over finite fields $\FF_{p}$ with $p$ prime. We first make a precise
conjecture about the permutation group generated by $\Out(\F_{2})$
on $\X_{-2}(\FF_{p})$ that shows there is no obstruction at the level
of the permutation group to a pseudo-Anosov acting `generically'.
We prove that this conjecture is sharp. We show that for a fixed pseudo-Anosov
$g\in\Out(\F_{2})$, there is always an orbit of $g$ of length $\geq C\log p+O(1)$
on $\X_{\kappa}(\FF_{p})$ where $C>0$ is given in terms of the eigenvalues
of $g$ viewed as an element of $\GL_{2}(\Z)$. This improves on a
result of Silverman from \cite{Silverman} that applies to general
morphisms of quasi-projective varieties. We have discovered that the
asymptotic $(p\to\infty)$ behavior of the longest orbit of a fixed
pseudo-Anosov $g$ acting on $\X_{-2}(\FF_{p})$ is dictated by a
dichotomy that we describe both in combinatorial terms and in algebraic
terms related to Gauss's ambiguous binary quadratic forms, following
Sarnak \cite{Sarnak}. This dichotomy is illustrated with numerics,
based on which we formulate a precise conjecture in Conjecture \ref{conj:behaviour-of-PA}.
\end{abstract}

\maketitle

\section{Introduction}

For $\kappa\in\Z,$ let $\X_{\kappa}$ denote the affine surface 
\begin{equation}
\X_{\kappa}:\:x^{2}+y^{2}+z^{2}=xyz+2+\kappa.\label{eq:Xkdef}
\end{equation}
When $\kappa=-2$, $\X_{-2}$ is Markoff's surface. A theorem of Markoff
\cite{Markoff2} relates the integer points on $\X_{-2}$ to the Diophantine
properties of $\Q$; in particular to the Markoff spectrum. In a different
vein, the real and complex points of $\X_{\kappa}$ are related to
moduli spaces of $\SL_{2}(\C)$-local systems on a torus with one
puncture \cite{Goldman}. Due to this connection, letting $\F_{2}$
denote the free group on 2 generators, the group $\Out(\F_{2})\cong\GL_{2}(\Z)$
acts by automorphisms of $\X_{\kappa}$, viewed as a scheme of finite
type over $\Z$. The group $\Out(\F_{2})$ is the mapping class group
of the torus with one puncture, and the free group $\F_{2}$ is the
fundamental group of this surface. As such, $\Out(\F_{2})$ is subject
to Thurston's classification of mapping class group elements \cite{THURSTON}
into periodic, reducible, or \emph{pseudo-Anosov} (p-A.) elements.
From the point of view of $\GL_{2}(\Z),$ an element is p-A. if it
is \emph{hyperbolic}, that is, has two distinct real eigenvalues.
The current paper aims to investigate how p-A.\emph{ }elements of
$\Out(\F_{2})$ act on $\X_{\kappa}(\FF_{p})$ for prime $p$. 

The study of p-A.\emph{ }elements of $\Out(\F_{2})$ acting on $\X_{\kappa}(\R)$
and $\X_{\kappa}(\C)$ has been ongoing since the early 1980s, instigated
by a paper of Kohmoto, Kadanoff and Tang \cite{KKT} where the spectrum
of a 1D lattice Schrödinger operator with a quasiperiodic potential
was related to the dynamics of a particular p-A. automorphism (the
\emph{Fibonacci substitution})\emph{ }on $\X_{\kappa}(\R)$. In \cite{Cantat},
Cantat resolved a conjecture of Kadanoff relating the topological
entropy of a p-A. element acting on $\X_{\kappa}(\R)$ to the largest
eigenvalue of the corresponding matrix in $\GL_{2}(\Z)$. See also
Bowditch \cite{BOW} for some related questions.

Here, we begin a parallel study for the action of p-A. elements on
$\X_{\kappa}(\FF_{p}).$ Any p-A. element $\Phi$ of $\Out(\F_{2})$
gives for each prime $p$ a permutation $\Phi_{p}$ of $\X_{\kappa}(\FF_{p})$.
In this paper we propose that in the study of p-A. $\Phi$ acting
on $\X_{\kappa}(\FF_{p})$, one should replace topological entropy
by the asymptotic complexity of the family of permutations $\{\Phi_{p}\}$.
In particular, we ask the following question. 
\begin{problem}
\emph{\label{prob:main-problem}For fixed p-A. $\Phi$, what is the
asymptotic behavior of 
\[
\frac{\log(\:\text{{\rm longest cycle of} \ensuremath{\Phi_{p}\:\mathrm{on}\:\X_{\kappa}(\FF_{p})\:)}}}{\log p}
\]
as $p\to\infty$?}
\end{problem}
Empirically, the answer to this question is quite surprising (see
Conjecture \ref{conj:behaviour-of-PA} below). We also obtain a theoretical
result towards this question in Theorem \ref{thm:long-cycles-uniform}
below. We will be primarily interested in the case of $\kappa=-2$,
although we prove some of our results for general $\kappa$.

Before tacking Problem \ref{prob:main-problem}, a preliminary question
intervenes. It could a priori be the case that the permutation group
generated by $\Out(\F_{2})$ on $\X_{\kappa}(\FF_{p})$ is highly
restricted and this would of course affect how a single element can
behave. 

Let $\X_{-2}^{*}(\FF_{p})=\X_{-2}(\FF_{p})-(0,0,0)$. Bourgain, Gamburd,
and Sarnak prove in \cite[Theorem 2]{BGS} that for all primes outside
a very small exceptional set, the action of $\Out(\F_{2})$ on $\X_{-2}^{*}(\FF_{p})$
is transitive, which was a conjecture of McCullough and Wanderley
from \cite{Transitive}. Sarnak has raised more generally the question
of what permutation group is generated by the action of $\Out(\F_{2})$
on $\X_{-2}^{*}(\FF_{p})$. 

It follows from work of Horowitz \cite{Horowitz} (see also Goldman
\cite{Goldman}) that 
\[
\Aut(\X_{\kappa})\cong\mathrm{P}\GL_{2}(\Z)\ltimes N
\]
where the $\PGL_{2}(\Z)$ factor is induced by $\Out(\F_{2})$ and
$N$ is the Klein four-group generated by even sign changes
\[
n_{1}:(x,y,z)\longmapsto(x,-y,-z)
\]
(similarly $n_{2}$, $n_{3}$). For $p$ odd, each $N$-orbit on $\X_{-2}^{*}(\FF_{p})$
contains four distinct points (see Lemma \ref{no (0,0,z)'s} below).
Thus $\Out(\F_{2})$ cannot act 2-transitively on $\X_{-2}^{*}(\FF_{p})$
for any prime, since it must permute orbits of $N$. In light of this
observation, we should examine instead the action of $\Out(\F_{2})$
on the set of $N$-orbits in $\X_{-2}^{*}(\FF_{p})$, which we denote
by $\Y_{-2}(\FF_{p})$.

Let $H(p)$ denote the permutation group generated by $\Out(\F_{2})$
acting on $\Y_{-2}(\FF_{p})$. We write $A_{n}$ for the alternating
group on $n$ letters and $S_{n}$ for the symmetric group. We prove
the following.
\begin{thm}
\label{thm:containment} Let $n=|\Y_{-2}(\FF_{p})|$ and let $p>3$.
Then, $H(p)\leq A_{n}$ if and only if $p\equiv3\pmod{16}$. 
\end{thm}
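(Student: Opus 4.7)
The plan is to reduce to computing the signs of two explicit involutions acting on $\Y_{-2}(\FF_p)$, and then to calculate their fixed-point counts via character sums. The action of $\Out(\F_{2})\cong\GL_{2}(\Z)$ on $\X_{-2}$ factors through $\PGL_{2}(\Z)$ because $-I$ fixes the trace coordinates $x=\tr(A)$, $y=\tr(B)$, $z=\tr(AB)$. Writing the standard generators of $\Aut(\F_{2})$ in trace coordinates, together with the $S_{3}$ of coordinate permutations coming from the symmetry of the Markoff equation, one obtains three elements whose images generate the $\Out(\F_2)$-action on $\Y_{-2}(\FF_p)$:
\[
P:(x,y,z)\mapsto(y,x,z),\quad R:(x,y,z)\mapsto(y,z,x),\quad V:(x,y,z)\mapsto(x,y,xy-z).
\]
Because $R^{3}=\mathrm{id}$, $R$ induces an even permutation, so $H(p)\leq A_{n}$ iff both involutions $P$ and $V$ act evenly on $\Y_{-2}(\FF_p)$. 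For an involution with $f$ fixed points on a set of size $n$, the sign is $(-1)^{(n-f)/2}$, so I need $n$, $f_{P}$, and $f_{V}$.

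A standard character-sum calculation (viewing the Markoff equation as a quadratic in $x$) gives $|\X_{-2}(\FF_{p})|=p^{2}+3p\chi(-1)+1$, where $\chi$ is the Legendre symbol mod $p$; hence $n=p(p+3\chi(-1))/4$. The condition $x=y$ is $N$-invariant, and a $P$-preserved $N$-orbit is exactly one containing a point with $x=y$, with two such points per orbit. Substituting $x=y$ into the Markoff equation gives $2a^{2}+c^{2}=a^{2}c$; summing $\chi$ of the discriminant $a^{2}(a^{2}-8)$ via the evaluation $\sum_{a}\chi(a^{2}-8)=-1$ yields $f_{P}=(p-2-\chi(-2))/2$.

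The count $f_{V}$ is more intricate, since an $N$-orbit can be $V$-preserved without containing any $V$-fixed point. A case analysis of when $V(x,y,z)$ coincides with $n(x,y,z)$ for some $n\in N$ shows that $V$-preserved $N$-orbits on $\X_{-2}^{*}$ partition into three disjoint $N$-invariant families: the Vieta locus $\{xy=2z\}$ (where $V$ acts trivially on the orbit) and the two coordinate hyperplanes $\{x=0\}$ and $\{y=0\}$ (where $V$ acts on the orbit as a nontrivial sign change). The substitution $z=xy/2$ collapses the Markoff equation to the identity $(x^{2}-4)(y^{2}-4)=16$, whose solution set in $\X_{-2}^{*}$ has size $p-4-\chi(-1)$ by a character sum; each of $\{x=0\},\{y=0\}$ reduces to $x^{2}+z^{2}=0$, contributing $2(p-1)$ points when $\chi(-1)=1$ and none otherwise. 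Dividing each family by the orbit size $4$ yields
\[
f_{V}=\begin{cases}(p-5)/4+(p-1) & \text{if }p\equiv1\pmod 4,\\(p-3)/4 & \text{if }p\equiv3\pmod 4.\end{cases}
\]

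Finally, substituting into $(-1)^{(n-f)/2}$ and reducing in each residue class mod $16$ shows that $\mathrm{sgn}(V)=+1$ iff $p\equiv3\pmod 8$, and $\mathrm{sgn}(P)=+1$ iff $p\equiv3,5,7,9\pmod{16}$. Their intersection is precisely $p\equiv3\pmod{16}$, giving the theorem. The main obstacle is the enumeration of $V$-preserved $N$-orbits: the two ``hidden'' contributions from the coordinate hyperplanes $\{x=0\},\{y=0\}$ arise when $V$ acts on an $N$-orbit as a nontrivial element of $N$ rather than as the identity, and they are exactly what refines the obstruction from mod $8$ to mod $16$.
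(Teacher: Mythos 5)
Your argument is correct and follows the same overall strategy as the paper's: compute the sign of each generating involution on $\Y_{-2}(\FF_{p})$ as $(-1)^{(n-f)/2}$, evaluate the fixed-orbit counts $f$ by quadratic character sums, and intersect the resulting congruence conditions. Your generators $P$, $R$, $V=m_{3}$ generate the same group as the paper's $m_{1},m_{2},m_{3}$ together with $S_{3}$, and your character-sum evaluations reproduce the counts the paper gets by citing Carlitz and the count of consecutive quadratic residues. The one place where you genuinely diverge --- and in fact improve on the paper --- is the treatment of $f_{V}$. In Lemma \ref{Sign of m_1} the paper takes $|F|$ to be one quarter of the number of points of $\X_{-2}^{*}(\FF_{p})$ fixed by $m_{i}$; that accounts only for $N$-orbits fixed pointwise and overlooks precisely the orbits you isolate, those meeting the two coordinate hyperplanes, on which $m_{i}$ acts as a nontrivial element of $N$ (so the orbit is fixed setwise but contains no $m_{i}$-fixed point). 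For $p\equiv1\pmod4$ these contribute an additional $p-1$ fixed orbits; the paper's parity conclusion survives only because the resulting correction to $(n-|F|)/2$ equals $(p-1)/2$, which is even in that case. Your case analysis of when $V(x,y,z)\in N\cdot(x,y,z)$ makes this step airtight. Two further, harmless, differences: you determine the sign of $P$ in every odd residue class mod $16$, whereas the paper only needs the cases $p\equiv3\pmod8$ since $V$ is already odd otherwise; and your observation that a $P$-fixed orbit contains exactly two points with $x=y$ replaces the paper's count of four points with $x=\pm y$ per orbit --- both yield $f_{P}=(p-3)/2$ when $p\equiv3\pmod 8$, and your final intersection $p\equiv3\pmod{16}$ agrees with Theorem \ref{thm:containment}.
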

The above theorem, alongside computations of $H(p)$ for $p\le47$,
lead us to conjecture the following:
\begin{conjecture}
\label{conj: 1} Let $H(p)$ denote the permutation group induced
by the action of $\Out(\F_{2})$ on $\Y_{-2}(\FF_{p})$, and let $n=|\Y_{-2}(\FF_{p})|$.
Then when $p>3$ 
\end{conjecture}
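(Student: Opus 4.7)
Since $N$ is normal in $\Aut(\X_{-2})$, the action of $\Out(\F_2)$ on $\X_{-2}^{*}(\FF_p)$ descends to $\Y_{-2}(\FF_p)$ through the quotient $\Out(\F_2)\cong\GL_2(\Z)\twoheadrightarrow\PGL_2(\Z)$, so $H(p)$ is a quotient of $\PGL_2(\Z)$. A direct computation from the standard presentation gives $\PGL_2(\Z)^{\mathrm{ab}}\cong C_2\times C_2$, so the sign character $\epsilon:H(p)\to\{\pm 1\}$, which factors through this abelianization, is trivial if and only if it vanishes on two chosen involutions whose classes generate $\PGL_2(\Z)^{\mathrm{ab}}$.

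I take the Vieta involution $R:(x,y,z)\mapsto(x,y,xy-z)$ and the coordinate transposition $\sigma:(x,y,z)\mapsto(y,x,z)$. Each normalizes $N$, so they induce involutions $\bar R,\bar\sigma$ of $\Y_{-2}(\FF_p)$. Both descend from determinant $-1$ matrices in $\GL_2(\Z)$ whose product $R\sigma$ lifts to the standard $C_2$-generator of $\PSL_2(\Z)\cong C_2*C_3$, so between them $R$ and $\sigma$ hit both independent classes in $\PGL_2(\Z)^{\mathrm{ab}}$. Since the sign of an involution on a set of size $n$ is $(-1)^{(n-f)/2}$ with $f$ the number of fixed points, the theorem reduces to determining $n=|\Y_{-2}(\FF_p)|$, $f_{\bar R}$, and $f_{\bar\sigma}$ modulo $4$. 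The count $n$ equals $|\X_{-2}^{*}(\FF_p)|/4$ by Lemma \ref{no (0,0,z)'s}, and $|\X_{-2}^{*}(\FF_p)|$ has a classical closed form in $p$ and $\left(\tfrac{-1}{p}\right)$. A point of $\Y_{-2}(\FF_p)$ is fixed by $\bar R$ iff some lift satisfies $R(x,y,z)\in N\cdot(x,y,z)$; unwinding the four cases shows the relevant preimage in $\X_{-2}^{*}(\FF_p)$ is the union $\{xy=2z\}\cup\{x=0\}\cup\{y=0\}$, each an explicit affine curve whose $\FF_p$-count is elementary. Similarly, $\bar\sigma$-fixed orbits are supported on $\{x=y\}\cup\{x=-y\}$, which reduces to a conic count over $\FF_p$.

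The resulting counts are polynomials in $p$ plus error terms involving $\left(\tfrac{-1}{p}\right)$ and $\left(\tfrac{2}{p}\right)$. Assembling them, the condition that both $(n-f_{\bar R})/2$ and $(n-f_{\bar\sigma})/2$ be even becomes a system of congruences on $p$ that is a priori of conductor $8$ but is refined to conductor $16$ by parity contributions from the division by $4$ in the definition of $n$ together with the overlaps among the component loci in the fixed sets; a case check over the eight odd residues mod $16$ then singles out $p\equiv 3\pmod{16}$. I expect the hardest step to be precisely this last bookkeeping: the refinement from a mod-$8$ to a mod-$16$ condition depends on tracking how $p^{2}$, $3p$, and the Legendre-symbol contributions interact modulo $16$ together with a careful census of the degenerate strata (coordinate vanishings, meeting points of the component curves, and $N$-orbits lying inside the fixed locus in a non-generic way), since a miscount by $2$ in any of these contributions would shift the answer to a different residue class.
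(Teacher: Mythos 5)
Your proposal only engages with the half of the statement that the paper actually proves (as Theorem \ref{thm:containment}), and the half it omits is precisely why the paper states this as a conjecture rather than a theorem. A parity computation of the kind you describe --- determining the signs of involutions generating $H(p)$ via $\sign=(-1)^{(n-f)/2}$ and fixed-point counts over $\FF_p$ --- can only decide whether $H(p)\leq A_n$. It says nothing about how large $H(p)$ is: a group all of whose elements are even permutations could a priori be intransitive, imprimitive, or otherwise far smaller than $A_n$. Conjecture \ref{conj: 1} asserts the isomorphisms $H(p)\cong S_n$ or $H(p)\cong A_n$, so one also needs the lower bound $A_n\leq H(p)$, and that is the genuinely open part: transitivity of the action is only known outside an exceptional set of primes (Bourgain--Gamburd--Sarnak), and $A_n\leq H(p)$ is known only for $p\equiv1\pmod 4$ outside that set and for a density-one set of primes (Meiri--Puder), which is exactly what the paper cites. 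No amount of bookkeeping over residues mod $16$ in your final step can supply this; the gap is structural, not computational.

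For the part you do address, your route is essentially the paper's proof of Theorem \ref{thm:containment}: it computes the parity of the Markoff involutions $m_i$ (your $R$) and of the transposition $(1\,2)$ (your $\sigma$) on $\Y_{-2}(\FF_p)$, the former giving the condition $p\equiv3\pmod 8$ and the latter refining it to $p\equiv3\pmod{16}$, using $|\Y_{-2}(\FF_p)|=\frac14p(p\pm3)$ and counts of consecutive quadratic residues. Two details to watch if you carry this out. First, you must actually verify that the classes of your two chosen involutions generate $\PGL_2(\Z)^{\mathrm{ab}}$; the paper sidesteps this by computing the sign of every generator ($m_1,m_2,m_3$ and all of $S_3$) directly. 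Second, your fixed locus $\{xy=2z\}\cup\{x=0\}\cup\{y=0\}$ for $\bar R$ rightly includes orbits fixed setwise but not pointwise; these extra components are nonempty exactly when $p\equiv1\pmod4$, where they contribute $p-1$ additional fixed orbits, and one must check that the resulting correction $(p-1)/2$ to the transposition count is even and hence harmless --- it is, but a slip here is exactly the kind of ``miscount by $2$'' you warn about.
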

\begin{itemize}
\item $H(p)\cong S_{n}$ if $p\not\equiv3\pmod{16},$ 
\item $H(p)\cong A_{n}$ if $p\equiv3\pmod{16}.$ 
\end{itemize}
Meiri and Puder have proved in \cite{MP} that $H(p)$ contains $A_{n}$
whenever $p\equiv1\pmod4$ and $p$ is outside the Bourgain-Gamburd-Sarnak
exceptional set, and also for a density 1 set of primes without any
congruence condition. In these cases, Theorem \ref{thm:containment}
describes exactly what $H(p)$ is. This shows there is no obstruction
at the level of the group $H(p)$ to a p-A. element behaving `generically'
on $\Y_{-2}(\mathbb{F}_{p})$.

We now describe our theoretical result towards Problem \ref{prob:main-problem}.
In \cite{Silverman} Silverman studied a more general version of this
problem and obtained as a consequence the following result.
\begin{thm}[{Silverman \cite[Theorem 3(a)]{Silverman}}]
\label{thm:silverman}Let $K$ be a number field with ring of integers
$\O_{K}$ and $V$ a quasi-projective variety defined over $K$. Let
$\varphi:V\to V$ be a morphism defined over $K$. If $\mathfrak{p}\in\Spec(\O_{K})$
is such that $V$ and $\varphi$ have good reduction at $\mathfrak{p}$,
then write $\varphi_{\mathfrak{p}}$ and $V(\FF_{\mathfrak{p}})$
for these reductions and $N(\mathfrak{p})$ for the norm of this prime.
For any $\epsilon>0$, the set of $\mathfrak{p}\in\Spec(\O_{K})$
such that there is good reduction of $\varphi$ and $V$ at $\mathfrak{p}$,
and an orbit of $\varphi_{\mathfrak{p}}$ on $V(\FF_{\mathfrak{p}})$
of length $\geq(\log N(\mathfrak{p}))^{1-\epsilon}$ has analytic
density 1.
\end{thm}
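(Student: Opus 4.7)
The plan is to reduce the theorem to bounding the forward orbit length of a single well-chosen point $P \in V(K)$ modulo $\mathfrak{p}$, to translate a short orbit into an arithmetic collision $\varphi^m(P) \equiv \varphi^n(P) \pmod{\mathfrak{p}}$ with $\varphi^m(P) \neq \varphi^n(P)$ globally, and finally to count the bad primes using prime-divisor bounds for algebraic integers of controlled height. One first selects $P \in V(K)$ whose forward $\varphi$-orbit is infinite in $V(K)$; then the $\varphi_{\mathfrak{p}}$-orbit of $P \bmod \mathfrak{p}$ is itself an orbit of $\varphi_\mathfrak{p}$ on $V(\FF_\mathfrak{p})$, so bounding its length from below bounds the longest orbit from below. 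Such a $P$ exists in the dynamical situations of interest (and in general one may pass to a finite extension of $K$, which does not affect the analytic density).

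Set $L_\mathfrak{p} := (\log N(\mathfrak{p}))^{1-\epsilon}$. If the forward orbit of $P \bmod \mathfrak{p}$ has fewer than $L_\mathfrak{p}$ distinct elements, then by pigeonhole there exist integers $0 \leq m < n < L_\mathfrak{p}$ with $\varphi^m(P) \neq \varphi^n(P)$ in $V(K)$ but $\varphi^m(P) \equiv \varphi^n(P) \pmod{\mathfrak{p}}$. Fixing a projective completion of $V$ and using the standard functorial height estimate, one has $h(\varphi^n(P)) \leq C_1 d^n + C_2$ for constants depending on $\varphi$ and $P$, where $d > 1$ reflects the action of $\varphi$ on a suitable ample class. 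For each such pair $(m,n)$, the two distinct $K$-points $\varphi^m(P)$ and $\varphi^n(P)$ produce a nonzero $\Delta_{m,n} \in \O_K$ of norm at most $\exp(C_3 d^n)$, and any colliding prime $\mathfrak{p}$ must divide $\Delta_{m,n}$. The standard bound $\omega(a) = O(\log |N(a)| / \log \log |N(a)|)$ on the number of prime divisors then gives at most $O(d^n / n)$ colliding primes for each pair.

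Summing over pairs $(m,n)$ with $n < L_Y := (\log Y)^{1-\epsilon}$, the total number of bad primes $\mathfrak{p}$ with $N(\mathfrak{p}) \leq Y$ is
\[
\ll \exp\bigl(O((\log Y)^{1-\epsilon})\bigr) = Y^{o(1)} = o(Y / \log Y) = o(\pi(Y)),
\]
so the bad primes have analytic density zero, as required. The principal obstacle is making the height bound rigorous in the genuinely quasi-projective setting: one must choose a projective completion on which $\varphi$ extends in a controlled way, avoid the indeterminacy locus along the forward orbit of $P$, and possibly work with a canonical height adapted to $\varphi$ or to an ample polarization to justify the growth estimate $h(\varphi^n(P)) = O(d^n)$. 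The pigeonhole and prime-counting arguments, by contrast, are essentially soft once this estimate is in hand.
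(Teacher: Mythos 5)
The paper does not actually prove this statement: it is quoted from Silverman \cite[Theorem 3(a)]{Silverman} purely as a benchmark for Theorem \ref{thm:long-cycles-uniform}, so there is no in-paper argument to compare against. Measured against Silverman's own proof, your outline is essentially the same argument: fix a wandering point $P$, observe that a short forward orbit of $P$ modulo $\mathfrak{p}$ forces a collision $\varphi^{m}(P)\equiv\varphi^{n}(P)\pmod{\mathfrak{p}}$ with $0\leq m<n<(\log N(\mathfrak{p}))^{1-\epsilon}$, use the height growth $h(\varphi^{n}(P))=O(d^{n})$ to bound the norm of a nonzero ``difference'' ideal by $\exp(O(d^{n}))$, and count prime divisors. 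Your final tally $\exp\bigl(O((\log Y)^{1-\epsilon})\bigr)=o(\pi(Y))$ is correct and does give density $0$ for the bad primes; you also rightly identify the height estimate in the quasi-projective setting (choice of model, discarding the finitely many primes where $P$ or its iterates fail to reduce into $V$) as the only technically delicate point.

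One caveat deserves emphasis. The existence of a point $P\in V(K)$ with infinite forward orbit is not something you can arrange: it is a standing hypothesis of Silverman's theorem which the quotation above suppresses, and without it the statement is simply false (take $V$ a single $K$-point and $\varphi$ the identity; then every orbit has length $1<(\log N(\mathfrak{p}))^{1-\epsilon}$ for large $\mathfrak{p}$). Your parenthetical repair --- pass to a finite extension $L/K$ containing a wandering point --- does not preserve density $1$: an orbit of $\varphi_{\mathfrak{P}}$ on $V(\FF_{\mathfrak{P}})$ for $\mathfrak{P}\subset\O_{L}$ is an orbit of $\varphi_{\mathfrak{p}}$ on $V(\FF_{\mathfrak{p}})$ only when $\mathfrak{P}\mid\mathfrak{p}$ has residue degree $1$, and by Chebotarev the primes $\mathfrak{p}$ of $K$ admitting such a $\mathfrak{P}$ have density strictly less than $1$ unless $L=K$. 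So the correct resolution is to restore the hypothesis, not to enlarge the field; with that hypothesis in place your argument closes.
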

This result applies directly to Problem \ref{prob:main-problem}.
Silverman's result actually provides many orbits of length $\geq(\log N(\mathfrak{p}))^{1-\epsilon}$.

What we can achieve in the current context is the removal of the $\epsilon$
from Theorem \ref{thm:silverman}, and get a statement for all primes
$p$ instead of just analytic density 1. Furthermore, our bounds are
independent of $\kappa$.
\begin{thm}
\label{thm:long-cycles-uniform}Given a pseudo-Anosov $g\in\Out(\F_{2})$,
let $\lambda$ denote the eigenvalue of largest modulus of the corresponding
matrix in $\GL_{2}(\Z)$. For any $\kappa\in\Z$, as $p\to\infty$,
$g$ has an orbit of length at least
\[
\frac{\log p}{\log|\lambda|}+O_{g}(1)
\]
on $\X_{\kappa}(\FF_{p})$. The implied constant depends on $g$,
but not on $\kappa$.
\end{thm}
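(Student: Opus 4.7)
The plan is to derive the lower bound from an incompatibility between a lower bound on $|\X_{\kappa}(\FF_{p})|$ (via Lang--Weil) and an upper bound on $|\mathrm{Fix}(g^{k})(\FF_{p})|$ summed over $k=1$ up to the longest orbit length. For the point count, Lang--Weil applied to the geometrically irreducible cubic surface $\X_{\kappa}$ gives $|\X_{\kappa}(\FF_{p})| = p^{2} + O(p^{3/2})$ with implied constant uniform in $\kappa$, since $\{\X_{\kappa}\}$ is a flat family of cubic surfaces of bounded complexity; hence $|\X_{\kappa}(\FF_{p})| \geq c\,p^{2}$ for $p$ large and some absolute $c > 0$.

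The essential input is a polynomial degree estimate for the iterates. Writing $g^{n} : \X_{\kappa} \to \X_{\kappa}$ in coordinates as polynomials $g^{n}_{i} \in \Z[x,y,z]$, I claim $d_{n} := \max_{i} \deg g^{n}_{i} \leq C_{g}\,|\lambda|^{n}$ for some $C_{g}$ depending only on $g$. This identifies the dynamical degree of $g$ acting on $\X_{\kappa}$ with $|\lambda|$, a statement essentially contained in \cite{Cantat}; it can also be checked combinatorially by writing $g$ as a word in Vieta involutions such as $(x,y,z) \mapsto (x,y,xy-z)$ and permutations, then tracking total degrees under composition. Uniformity in $\kappa$ is automatic because the formulas for $g$ do not involve $\kappa$. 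Granted this bound, $\mathrm{Fix}(g^{n}) \subset \X_{\kappa}$ is cut out by any two of the degree-$d_{n}$ equations $g^{n}_{i} - x_{i} = 0$. By B\'ezout on $\mathbb{P}^{3}$, the zero-dimensional part of the intersection of two such hypersurfaces with the cubic $\X_{\kappa}$ has degree at most $3\,d_{n}^{2}$, while any one-dimensional component has degree at most $3\,d_{n}$ and so contributes at most $3\,d_{n}(p+1)$ points over $\FF_{p}$. Therefore
\[
|\mathrm{Fix}(g^{n})(\FF_{p})| \;=\; O_{g}\!\left(|\lambda|^{2n} + p\,|\lambda|^{n}\right).
\]

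Let $N$ denote the length of the longest cycle of $g$ on $\X_{\kappa}(\FF_{p})$. Then every $P \in \X_{\kappa}(\FF_{p})$ lies in $\mathrm{Fix}(g^{k(P)})$ for some $1 \le k(P) \le N$, so summing the preceding bound gives
\[
c\,p^{2} \;\le\; \sum_{k=1}^{N} |\mathrm{Fix}(g^{k})(\FF_{p})| \;=\; O_{g}\!\left(|\lambda|^{2N} + p\,|\lambda|^{N}\right).
\]
At least one of the two terms on the right must be $\gg p^{2}$, and in either case this forces $|\lambda|^{N} \gg p$, yielding $N \ge \frac{\log p}{\log |\lambda|} + O_{g}(1)$. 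The main obstacle is the degree bound $d_{n} = O(|\lambda|^{n})$, since once that is in hand the rest of the argument is formal; either one invokes Cantat's dynamical-degree computation or carries out the direct combinatorial tracking through a Vieta-involution presentation of $\Out(\F_{2})$.
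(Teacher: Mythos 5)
Your overall skeleton coincides with the paper's: bound the number of fixed points of $g^{n}$ on $\X_{\kappa}(\FF_{p})$ by $O_{g}(p\,|\lambda|^{n})$, sum over $n\leq N$, and compare with $|\X_{\kappa}(\FF_{p})|\asymp p^{2}$. The final bookkeeping is fine. But there is a genuine gap at the one step that carries all the content: for the B\'ezout/Schwartz--Zippel count to say anything, you must know that the equations cutting out $\mathrm{Fix}(g^{n})$ inside $\X_{\kappa}\otimes\FF_{p}$ do not vanish identically on the surface \emph{modulo $p$}, for every prime $p$, every $\kappa$, and every relevant $n$. You assert that $\mathrm{Fix}(g^{n})$ is cut out by two of the equations $g^{n}_{i}-x_{i}=0$ and apply B\'ezout, but a priori these polynomials could reduce to zero on $\X_{\kappa}\otimes\FF_{p}$ for some primes (with the bad primes depending on $n$ and $\kappa$), in which case the intersection-theoretic bound degenerates. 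Cantat's result is a characteristic-zero statement and says nothing about reductions mod $p$. The paper's Sections on the Cayley cubic and the deformation from $\kappa=2$ exist precisely to close this gap: after reducing modulo the defining ideal, $g(x)$ and $g(y)$ become linear in $z$, and the determinant $\tilde{D}$ of the resulting $2\times2$ system is shown to equal $x^{a+b-1}D_{0}+O_{x}(x^{a+b-2})$ with $D_{0}\in\Z[y]$ monic up to sign (a Chebyshev polynomial $\pm U_{|d-c|-1}(y/2)$) and independent of $\kappa$. Monicity of the leading coefficient is what guarantees the eliminant is a nonzero polynomial of controlled degree modulo \emph{every} prime, uniformly in $\kappa$; without some such integral non-degeneracy statement your argument only yields the theorem for primes outside an uncontrolled exceptional set.

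A second, related issue is your degree bound $d_{n}\leq C_{g}|\lambda|^{n}$. The dynamical-degree statement you invoke gives $\deg(g^{n})^{1/n}\to|\lambda|$, i.e.\ $\deg(g^{n})=|\lambda|^{n(1+o(1))}$, which after running your argument would only recover a Silverman-type bound $N\geq(1-\epsilon)\log p/\log|\lambda|$, not the sharp $\log p/\log|\lambda|+O_{g}(1)$. To get the constant-precision version you need the multiplicative bound $\deg(g^{n})\leq C|\lambda|^{n}$; the paper obtains the analogous control by choosing a \emph{monotone} representative of $g^{n}$ in $\Aut(\F_{2})$ (Parzanchevski--Puder) so that the $(x,z)$-degree of the Fricke trace polynomial $P_{w}$ is bounded by an entry of the matrix $g^{n}$, hence by $C|\lambda|^{n}$ exactly. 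Your suggested combinatorial tracking through Vieta involutions can be made to work and gives the same conclusion, but as written it is an assertion rather than a proof, and it is not interchangeable with the citation of Cantat.
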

Now we describe our numerical results which show what the answer to
Problem \ref{prob:main-problem} should be, at least for $\Y_{-2}(\FF_{p})$.
First we give two natural guesses, that turn out to both be wrong.

\textbf{Guess~1:~a~p-A.~$g$~acts~as~a~random~map~on~$\Y_{-2}(\FF_{p})$.}
A random map from a set of size $N$ to itself has with high probability
its longest orbit of size $\asymp\sqrt{N}$. Since $|\Y_{-2}(\FF_{p})|\asymp p^{2}$
this predicts the longest orbit of $g$ acting on $\Y_{-2}(\FF_{p})$
will have size $\asymp p$. This fact comes from a collision heuristic
based on the `Birthday paradox'. However, this heuristic is not convincing,
since $g$ is invertible, so should really be viewed as a random permutation
(for some notion of random, see next guess). Also, this guess doesn't
give the right answer in general (see below).

\textbf{Guess~2:~a~p-A.~$g$~acts~as~a~random~permutation~on~$\Y_{-2}(\FF_{p})$.
}Perhaps we should model the action of $g$ on $\Y_{-2}(\FF_{p})$
by a permutation chosen uniformly at random from $A_{n}$ or $S_{n}$,
where $n=|\Y_{-2}(\FF_{p})|$, according to Conjecture \ref{conj: 1}.
To simplify things, let us just consider $S_{n}$, the case of $A_{n}$
being similar. Then it is known that a permutation drawn uniformly
at random from $S_{n}$ has a cycle of length at least $n/2$ in its
cycle decomposition with positive probability. This fact is closely
related to the well-known `100 Prisoners Problem' posed in \cite{gal2007cell}.
So this would predict for fixed p-A. $g$ that as $p$ varies we should
often (in fact being more careful with the statistics, with high probability)
see an orbit of length $\asymp p^{2}$ of $g$ on $\Y_{-2}(\FF_{p})$.
This guess also turns out not to be correct in general.

\begin{center}
\begin{figure}
\begin{centering}
\includegraphics[clip,scale=0.7,clip,trim=2cm 14cm 5cm 2cm,]{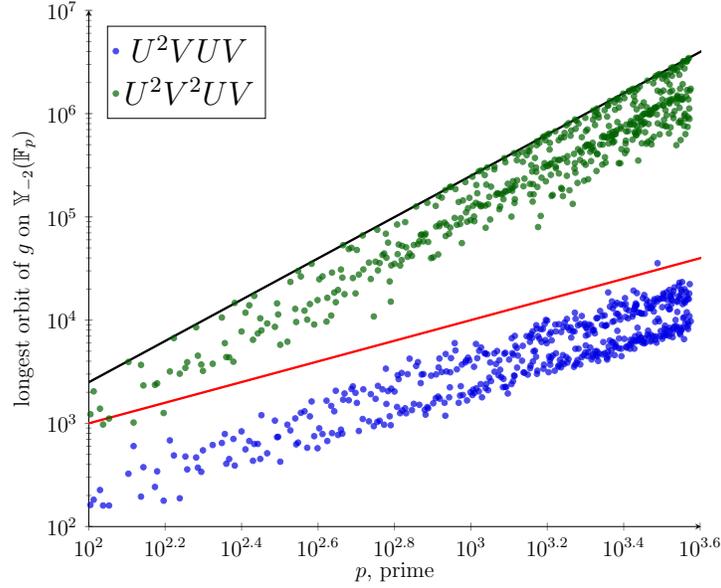}
\par\end{centering}
\caption{\label{fig:graphs-of-longest-orbit}This shows the longest orbits
of two p-A. elements $U^{2}VUV$ (blue) and $U^{2}V^{2}UV$ (green)
on $\protect\Y_{-2}(\protect\FF_{p})$. The black line is $y=p^{2}/4$,
which is asymptotic to $|\protect\Y_{-2}(\protect\FF_{p})|$. The
red line is $y=10p$. The plots are in log vs log scale axes with
$p$ on the horizontal axis and longest orbit on the vertical.}
\end{figure}
\par\end{center}

To describe our numerics, we introduce special elements of $\PGL_{2}(\Z)$.
We first note, if $g$ has determinant $-1$, then the qualitative
behavior of the longest orbit of $g$ will be governed by that of
$g^{2}$, which has determinant $1$. So it is sufficient (at least
for the phenomena we show) to consider only elements of $\PSL_{2}(\Z)$.
Let 
\[
U=\left(\begin{array}{cc}
1 & 0\\
1 & 1
\end{array}\right),\quad V=\left(\begin{array}{cc}
1 & 1\\
0 & 1
\end{array}\right).
\]

\begin{lem}
\label{lem:UV forms}Every hyperbolic element of $\PSL_{2}(\Z)$ is
conjugate to an element
\[
U^{n_{1}}V^{m_{1}}\ldots U^{n_{k}}V^{m_{k}}
\]
 with $k>0$ and all $n_{i},m_{i}>0$. We call this a reduced $UV$-word.
\end{lem}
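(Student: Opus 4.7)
The plan is to connect a hyperbolic $g\in\PSL_{2}(\Z)$ to the continued fraction expansion of its attracting fixed point via Gauss's classical reduction theory. Lift $g$ to $\SL_{2}(\Z)$ with positive trace. Hyperbolicity gives two distinct real fixed points of $g$ acting by Möbius transformations on $\R\cup\{\infty\}$; let $\alpha$ be the attracting one, which is a real quadratic irrational. By Gauss's reduction theorem, $\alpha$ is $\PSL_{2}(\Z)$-equivalent under the Möbius action to a reduced quadratic irrational, meaning $\alpha>1$ and $-1<\bar{\alpha}<0$. Since conjugation in $\PSL_{2}(\Z)$ corresponds precisely to the Möbius action on fixed points, I may replace $g$ by a conjugate and assume $\alpha$ is reduced. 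Galois's theorem then gives a purely periodic continued fraction expansion $\alpha=[\overline{a_{1},\ldots,a_{n}}]$ with all $a_{i}\geq 1$.

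The key algebraic identity is $V^{a}U^{b}=\left(\begin{array}{cc} a & 1 \\ 1 & 0 \end{array}\right)\left(\begin{array}{cc} b & 1 \\ 1 & 0 \end{array}\right)$, verified by a direct calculation. When the period $n=2k$ is even, $M:=V^{a_{1}}U^{a_{2}}\cdots V^{a_{2k-1}}U^{a_{2k}}$ lies in $\SL_{2}(\Z)$, has strictly positive entries, and fixes $\alpha$ by the self-similarity of the periodic continued fraction; by Perron--Frobenius, $\alpha$ is its attracting fixed point. When $n$ is odd, the same construction applied to the doubled period $2n$ produces an analogous positive alternating $UV$-word fixing $\alpha$ with $\alpha$ attracting.

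The stabilizer of $\alpha$ in $\PSL_{2}(\Z)$ is infinite cyclic (this is Dirichlet's unit theorem for the real quadratic order $\Z[\alpha]$, or equivalently the theory of automorphs of indefinite binary quadratic forms). Let $M_{0}$ denote its primitive generator having $\alpha$ as attracting fixed point; then $M_{0}$ is itself a positive alternating $UV$-word, namely the period matrix of $\alpha$ in the minimal-period even case or its square in the minimal-period odd case. Both $g$ and $M$ lie in this stabilizer with $\alpha$ as attracting fixed point, so both are positive powers of $M_{0}$; in particular $g=M_{0}^{j}$ for some $j\geq 1$, giving a positive alternating $UV$-word $V^{b_{1}}U^{b_{2}}\cdots V^{b_{2\ell-1}}U^{b_{2\ell}}$ with $\ell\geq 1$ and all $b_{i}\geq 1$. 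A final cyclic conjugation by $V^{b_{1}}$ produces the stated form $U^{n_{1}}V^{m_{1}}\cdots U^{n_{k}}V^{m_{k}}$.

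The main obstacle is the case analysis required to keep the bookkeeping precise between $\SL_{2}(\Z)$ and $\PSL_{2}(\Z)$, between minimal and full continued fraction periods, and between the various candidate generators of the stabilizer; none of these are substantive difficulties, but they must be handled carefully. Alternatively, the whole argument can be packaged as an invocation of the known bijection between $\PSL_{2}(\Z)$-conjugacy classes of hyperbolic elements and equivalence classes of purely periodic continued fractions under cyclic shift, which is essentially what the lemma asserts.
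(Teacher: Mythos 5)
Your proof is correct, but it takes a genuinely different route from the paper's. The paper argues purely combinatorially from the presentation $\PSL_{2}(\Z)\cong\Z/2\Z*\Z/3\Z$: every conjugacy class has a cyclically reduced representative $SR^{y_{1}}\cdots SR^{y_{k}}$ with $y_{i}\in\{1,2\}$ (the excluded forms $S$ and $R^{y}$ not being hyperbolic), and the substitutions $SR=V$, $SR^{2}=U$ immediately convert this into a positive $UV$-word. That argument is a few lines and self-contained once one accepts the free-product structure. You instead pass through the attracting fixed point: Gauss reduction to a reduced quadratic irrational, Galois's purely periodic continued fraction, the identity $V^{a}U^{b}=\left(\begin{smallmatrix}a&1\\1&0\end{smallmatrix}\right)\left(\begin{smallmatrix}b&1\\1&0\end{smallmatrix}\right)$, and the cyclicity of the stabilizer (automorph theory) to conclude $g$ is a positive power of the period matrix. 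This is heavier, leaning on several classical theorems whose bookkeeping you rightly flag, and it has one assertion that deserves a citation or a short argument rather than a parenthesis: that the \emph{primitive} generator of the stabilizer with $\alpha$ attracting is exactly the (even) period matrix, i.e., the fundamental automorph. In exchange, your route buys more than the lemma asks for: it identifies the exponents $n_{i},m_{i}$ with the partial quotients of the fixed point, which is exactly the dictionary underlying the paper's later discussion (following Sarnak) of ambiguous classes, where cyclic palindromy of the $UV$-word corresponds to symmetry of the continued fraction period. Either proof is acceptable; the paper's is the more economical for the stated purpose.
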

This is well-known, but for completeness we prove Lemma \ref{lem:UV forms}
in Section \ref{sec:Algebraic-Characterization-of} below. Since the
orbit lengths of $g$ on $\X_{\kappa}(\FF_{p})$ are the same after
conjugation, we may simply consider reduced $UV$-words in what follows.
Our conjectural answer to Problem \ref{prob:main-problem} is based
on a dichotomy for hyperbolic $g\in\PSL_{2}(\Z)$.
\begin{defn}
A reduced $UV$-word $U^{n_{1}}V^{m_{1}}\ldots U^{n_{k}}V^{m_{k}}$
is a \emph{cyclic palindrome }if its reverse can be cyclically rotated
to obtain the original word. For example:
\[
U^{2}VUV\xrightarrow{\mathrm{reverse}}VUVU^{2}\xrightarrow{\mathrm{rotate}}UVU^{2}V\xrightarrow{\mathrm{rotate}}VU^{2}VU\xrightarrow{\mathrm{rotate}}U^{2}VUV.
\]
\end{defn}
Then $U^{2}VUV$ is a cyclic palindrome, whereas $U^{2}V^{2}UV$ is
not. Following Sarnak \cite{Sarnak} (who follows terminology of Gauss)
we make the following definition.
\begin{defn}
Say $g\in\PSL_{2}(\Z)$ is \emph{ambiguous }if the conjugacy class
of $g$ in $\PSL_{2}(\Z)$ is conjugated to the conjugacy class of
$g^{-1}$ in $\PSL_{2}(\Z)$ by an element of $\PGL_{2}(\Z)$ of determinant
$-1$.
\end{defn}
Our two definitions actually coincide.
\begin{prop}
\label{prop:two-defs}Let hyperbolic $g\in\PSL_{2}(\Z)$ be given
by a reduced $UV$-word. Then the $UV$-word is a cyclic palindrome
if and only if $g$ is ambiguous.
\end{prop}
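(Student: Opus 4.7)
My plan is to translate both sides of the stated equivalence into statements about the cyclic letter sequence of the reduced $UV$-word $w$ of $g$, using that this cyclic class is a complete invariant of the $\PSL_2(\Z)$-conjugacy class of a hyperbolic element (a statement accompanying Lemma~\ref{lem:UV forms}). The proposition will then reduce to a short involution calculation on cyclic words.

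First I would handle the outer action. Take $J = \begin{pmatrix} 0 & 1 \\ 1 & 0 \end{pmatrix} \in \PGL_2(\Z)$, which has determinant $-1$ and satisfies $JUJ^{-1}=V$, $JVJ^{-1}=U$. Since every element of $\PGL_2(\Z)\setminus\PSL_2(\Z)$ can be written as $J\cdot h$ with $h\in\PSL_2(\Z)$, and $h$-conjugation preserves $\PSL_2(\Z)$-conjugacy classes, $g$ is ambiguous if and only if the classes $[g^{-1}]$ and $[JgJ^{-1}]$ coincide in $\PSL_2(\Z)$. On the cyclic word, $J$-conjugation is simply the letter swap $U\leftrightarrow V$, which I will denote $\sigma(w)$.

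The main step is to identify the cyclic $UV$-word representing $[g^{-1}]$. The crucial observation is that
\[
S = \begin{pmatrix} 0 & 1 \\ -1 & 0 \end{pmatrix} \in \SL_2(\Z)
\]
satisfies $SUS^{-1}=V^{-1}$ and $SVS^{-1}=U^{-1}$, which one verifies by direct matrix multiplication. Applying $S$-conjugation to $g = U^{n_1}V^{m_1}\cdots U^{n_k}V^{m_k}$ and taking inverses yields
\[
Sg^{-1}S^{-1} = U^{m_k}V^{n_k}U^{m_{k-1}}V^{n_{k-1}}\cdots U^{m_1}V^{n_1},
\]
which is already a reduced $UV$-word. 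Hence the cyclic word representing $[g^{-1}]$ is $\sigma(\bar w)$, where $\bar w$ denotes the letter-reversal of $w$.

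Putting the two identifications together, $g$ is ambiguous if and only if $\sigma(w)=\sigma(\bar w)$ as cyclic words, and since $\sigma$ is an involution this is equivalent to $w=\bar w$ as cyclic words, i.e., to $w$ being a cyclic palindrome. The main conceptual obstacle is producing the element $S\in\SL_2(\Z)$ that inverts both $U$ and $V$ under conjugation; once this observation is in hand, the translation between the (a priori algebraic) operation of inversion and the (purely combinatorial) operation of letter-reversal becomes automatic, and the proposition follows at once.
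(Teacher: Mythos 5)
Your proof is correct. It shares the paper's overall skeleton --- reduce ambiguity to the single comparison $[JgJ^{-1}]=[g^{-1}]$ of $\PSL_{2}(\Z)$-conjugacy classes (the paper uses the same representative $w=\left(\begin{smallmatrix}0 & 1\\1 & 0\end{smallmatrix}\right)$ of the nontrivial outer class, and its action is likewise the letter swap $U\leftrightarrow V$) --- but the device used to identify the cyclic word of $g^{-1}$ is genuinely different. The paper passes to the normal form $SR^{y_{1}}\cdots SR^{y_{k}}$ in $\PSL_{2}(\Z)\cong\Z/2\Z*\Z/3\Z$, inverts and cyclically permutes there, and only at the end translates back through $V=SR$, $U=SR^{2}$; you instead observe that $S=\left(\begin{smallmatrix}0 & 1\\-1 & 0\end{smallmatrix}\right)\in\SL_{2}(\Z)$ conjugates $U\mapsto V^{-1}$ and $V\mapsto U^{-1}$, so that $g^{-1}$ is $\PSL_{2}(\Z)$-conjugate to the letter-swapped reversal $\sigma(\bar{w})$ without ever leaving the $UV$-alphabet; the endgame $\sigma(w)\sim\sigma(\bar{w})\Leftrightarrow w\sim\bar{w}$ is then the same in both arguments. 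What your route buys is a shorter, more self-contained computation; what it quietly borrows is the fact that the cyclic letter sequence of a reduced $UV$-word is a \emph{complete} invariant of the hyperbolic conjugacy class (conjugate iff cyclically equal), which the paper obtains precisely from the free-product normal form whose uniqueness statement accompanies Lemma \ref{lem:UV forms}. You cite that fact correctly, so there is no gap, but it is worth recognizing that the $\Z/2\Z*\Z/3\Z$ structure you avoid in the computation is still doing the work in the background.
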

We prove Proposition \ref{prop:two-defs} in Section \ref{sec:Algebraic-Characterization-of}.
In Figure \ref{fig:graphs-of-longest-orbit} we show the longest orbits
of $U^{2}VUV$ and $U^{2}V^{2}UV$ on $\Y_{-2}(\FF_{p})$. They evidently
have strikingly different behaviors. Note that
\[
U^{2}VUV=\left(\begin{array}{cc}
2 & 3\\
5 & 8
\end{array}\right),\quad U^{2}V^{2}UV=\left(\begin{array}{cc}
3 & 5\\
7 & 12
\end{array}\right),
\]
so they are both hyperbolic. However, Figure \ref{fig:graphs-of-longest-orbit}
shows that the longest orbit of $U^{2}VUV$ is on the order of $p$
and that of $U^{2}V^{2}UV$ is on the order of $p^{2}$. Based on
further evidence (see Table \ref{tab:This-table-gives-evidence} and
Figure \ref{fig:Histograms-showing-the}), we are led to conjecture
that the crucial difference between these words is that $U^{2}VUV$
is a cyclic palindrome/ambiguous. Write $L(g;p)$ for the longest
orbit of $g$ on $\Y_{-2}(\FF_{p})$. We make the following conjecture:

\newpage{}
\begin{conjecture}
\label{conj:behaviour-of-PA}Let $g\in\PSL_{2}(\Z)$ be hyperbolic.
If $g$ is ambiguous then
\begin{enumerate}
\item There are constants $C_{1}=C_{1}(g)>0$ and $C_{2}=C_{2}(g)>C_{1}$
such that $C_{1}p\leq L(g;p)\leq C_{2}p$ for all primes $p$.
\item The discrete probability measures 
\[
\frac{1}{\#\{\text{primes \ensuremath{p} \ensuremath{\leq}\ensuremath{X}\}}}\left(\sum_{p\leq X}\delta_{\frac{L(g;p)}{p}}\right)
\]
converge as $X\to\infty$ to a compactly supported Borel probability
measure on $\R$.
\end{enumerate}
If $g$ is \textbf{not }ambiguous then
\begin{enumerate}
\item There is a constant $c=c(g)$ such that $L(g;p)\geq cp^{2}$ for all
primes $p$.
\item The discrete probability measures 
\[
\frac{1}{\#\{\text{primes \ensuremath{p} \ensuremath{\leq}\ensuremath{X}\}}}\left(\sum_{p\leq X}\delta_{\frac{L(g;p)}{p^{2}}}\right)
\]
converge as $X\to\infty$ to a compactly supported Borel probability
measure on $\R$.
\end{enumerate}
As a particular consequence, we conjecture that the answer to Problem
\ref{prob:main-problem} for $\kappa=-2$ is
\[
\lim_{p\to\infty}\frac{\log(L(g;p))}{\log p}=\begin{cases}
1 & \text{if \ensuremath{g} is ambiguous.}\\
2 & \text{if \ensuremath{g} is not ambiguous.}
\end{cases}
\]
\end{conjecture}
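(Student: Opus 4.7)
I would split the conjecture along the ambiguous/non-ambiguous dichotomy and attack the pointwise bounds before the distributional convergence. I do not expect a uniform technique: the ambiguous case looks algebraic/geometric, while the non-ambiguous case demands a mixing input for the cyclic group $\langle g\rangle$ that does not seem accessible from current spectral-gap machinery.

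For the ambiguous case, I would start from the defining relation $\sigma g \sigma^{-1} = g^{-1}$ with $\sigma \in \PGL_2(\Z)$ of determinant $-1$. After adjusting $\sigma$ by a power of $g$ one can arrange $\sigma$ to be an involution; this is the standard ``symmetric'' representative in the theory of Gauss's ambiguous indefinite forms. The involution $\sigma$ then acts as a regular automorphism of $\X_{-2}$ over $\Z$, and its fixed locus $C_\sigma$ is a curve, stable under $g$ by the relation $g\sigma = \sigma g^{-1}$. The upper bound $L(g;p) \leq C_2 p$ should follow from two ingredients: (i) a proof that every long $g$-orbit on $\Y_{-2}(\FF_p)$ is confined to (or controlled by) an invariant curve $C_g$ built from $C_\sigma$, and (ii) the Weil bound $|C_g(\FF_p)| = O(p)$. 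For the matching lower bound $L(g;p) \geq C_1 p$, I would argue that the action of $g$ on a component of $C_g$ essentially reduces to multiplication by the fundamental unit $\lambda$ in the real quadratic order $\Z[\lambda]$, so that orbit lengths mod $p$ are governed by the multiplicative order of $\lambda$ in $(\Z[\lambda]/p)^\times$, which is $\gtrsim p$ for most $p$.

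For the non-ambiguous case, the first step is to prove that $\X_{-2}$ carries no nonconstant $g$-invariant rational function, ruling out an invariant curve. One natural route is to adapt Cantat's analysis of the $g$-action on the Picard group of a suitable compactification of $\X_{-2}$. Granting this, and combining with the Meiri--Puder theorem that $A_n \leq H(p)$ on a density-$1$ set of primes, one would then try to establish a quantitative mixing estimate for $g$ on $\Y_{-2}(\FF_p)$ strong enough to produce an orbit of length $\asymp p^2$.

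The main obstacle is this last mixing estimate. The Bourgain--Gamburd--Sarnak spectral-gap technology exploits the thickness of the full group generated by $\Out(\F_2)$; here we are restricted to a cyclic subgroup, for which no expansion machinery is presently available. The distributional convergence in part (2) of each case is a further layer of difficulty: in the ambiguous case it would plausibly follow from equidistribution of multiplicative orders of $\lambda$ in residue rings (in the spirit of Artin's conjecture), while in the non-ambiguous case it would require random-permutation statistics for $g_p$ conditioned on the ambient symmetries, which seems genuinely beyond current techniques.
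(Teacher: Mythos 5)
The statement you are addressing is a \emph{conjecture}: the paper offers no proof of it, only numerical evidence (Figure \ref{fig:graphs-of-longest-orbit}, Figure \ref{fig:Histograms-showing-the}, Table \ref{tab:This-table-gives-evidence}) and the much weaker Theorem \ref{thm:long-cycles-uniform}, which gives an orbit of length only $\log p/\log|\lambda|+O_g(1)$. So there is no proof in the paper to compare against, and your proposal should be judged as a research program. As such it identifies the right dichotomy and the right algebraic source of the dichotomy (reversibility: $\sigma g\sigma^{-1}=g^{-1}$, equivalently $g$ is a product of two involutions), but it is not a proof, and several of its load-bearing steps are either false as stated or rest on results that are themselves open.

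Concretely, in the ambiguous case your step (i) is wrong: the fixed curve $C_\sigma$ is not $g$-invariant (one has $gC_\sigma=C_{g\sigma g^{-1}}$ and $g\sigma g^{-1}=\sigma g^{-2}\neq\sigma$), and $g$-orbits are not confined to any curve --- they genuinely spread over the $\asymp p^2$ points of $\Y_{-2}(\FF_p)$. The reversibility heuristic controls only \emph{symmetric} orbits (those meeting $C_\sigma\cup C_{\sigma g}$), of which there are $O(p)$; it says nothing about the length of asymmetric orbits, so the claimed upper bound $L(g;p)\leq C_2p$ for \emph{all} $p$ does not follow from this route. (Note the paper itself had to remove an outlier $p=3079$ from the ambiguous histogram, so even the constant $C_2$ is delicate.) Your lower bound via the multiplicative order of $\lambda$ in $(\Z[\lambda]/p)^\times$ faces the Artin-type obstruction you mention but understate: the order of a fixed unit modulo $p$ is not known unconditionally to be $\gg p$ even for a density-one set of primes, and it certainly fails to be $\gg p$ for all $p$, whereas the conjecture asserts $C_1p\leq L(g;p)$ for all primes. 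In the non-ambiguous case you correctly flag that no mixing machinery exists for the cyclic group $\langle g\rangle$ (the Bourgain--Gamburd--Sarnak and Meiri--Puder results concern the full $\Out(\F_2)$-action and do not transfer), and the distributional statements in part (2) of both cases are beyond anything your outline supplies. In short: your proposal is a sensible heuristic narrative consistent with the paper's framing via Sarnak's ambiguous classes, but it does not close any of the gaps that make this a conjecture rather than a theorem.
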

\begin{center}
\begin{figure}
\begin{centering}
\includegraphics[scale=0.9,clip,trim=2.5cm 18cm 2cm 2cm,]{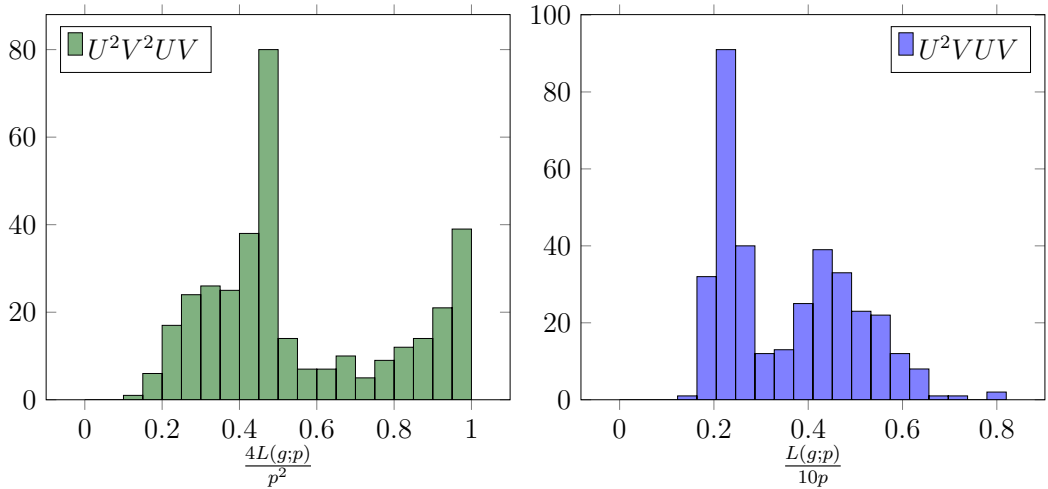}
\par\end{centering}
\caption{\label{fig:Histograms-showing-the}Histograms showing the distributions
that feature in Conjecture \ref{conj:behaviour-of-PA}. Here $p$
ranges between 1009 and 3761. For $U^{2}V^{2}UV$ the histogram shows
the distribution of $L(g;p)(p^{2}/4)^{-1}$. Note $|\protect\Y_{-2}(\protect\FF_{p})|$
is asymptotic to $p^{2}/4$. For $U^{2}VUV$ the distribution is of
$L(g;p)(10p)^{-1}$. The $10$ is not significant and has just been
chosen to scale the data. One outlier ($p=3079$, $L(g;p)=35585$)
has been removed from the $U^{2}VUV$ chart.}
\end{figure}
\par\end{center}

\renewcommand{\arraystretch}{1.2}

\begin{table}
\begin{tabular}{|cr|}
\hline 
$g$ \textbf{not }ambiguous & \tabularnewline
\hline 
\multicolumn{1}{|c|}{$g$} & $L(g;727)$\tabularnewline
\hline 
$V^{1}U^{1}V^{3}U^{1}V^{2}U^{2}$ & 87928\tabularnewline
$V^{3}U^{2}V^{1}U^{2}V^{2}U^{2}$ & 77996\tabularnewline
$V^{1}U^{1}V^{2}U^{1}V^{2}U^{3}$ & 75289\tabularnewline
$V^{2}U^{1}V^{1}U^{2}V^{1}U^{2}$ & 95183\tabularnewline
$V^{2}U^{1}V^{1}U^{1}V^{3}U^{1}$ & 42238\tabularnewline
$V^{2}U^{1}V^{1}U^{2}V^{2}U^{3}$ & 62702\tabularnewline
$V^{1}U^{1}V^{1}U^{3}V^{2}U^{1}$ & 51981\tabularnewline
$V^{1}U^{1}V^{3}U^{4}V^{1}U^{1}$ & 75716\tabularnewline
$V^{1}U^{4}V^{2}U^{1}V^{1}U^{1}$ & 79495\tabularnewline
$V^{1}U^{3}V^{2}U^{2}V^{3}U^{1}$ & 86897\tabularnewline
$V^{3}U^{1}V^{1}U^{2}V^{1}U^{3}$ & 108710\tabularnewline
$V^{2}U^{3}V^{1}U^{1}V^{3}U^{1}$ & 61549\tabularnewline
$V^{1}U^{1}V^{2}U^{4}V^{3}U^{1}$ & 87870\tabularnewline
$V^{1}U^{1}V^{2}U^{1}V^{3}U^{2}$ & 82633\tabularnewline
$V^{2}U^{4}V^{1}U^{1}V^{1}U^{1}$ & 79495\tabularnewline
$V^{4}U^{1}V^{1}U^{1}V^{1}U^{4}$ & 130737\tabularnewline
$V^{3}U^{4}V^{1}U^{1}V^{2}U^{1}$ & 72046\tabularnewline
\hline 
\end{tabular}%
\begin{tabular}{|cr|}
\hline 
$g$ ambiguous & \tabularnewline
\hline 
\multicolumn{1}{|c|}{$g$} & $L(g;727)$\tabularnewline
\hline 
$V^{1}U^{1}V^{1}U^{1}V^{1}U^{2}$ & 3193\tabularnewline
$V^{1}U^{1}V^{3}U^{1}V^{1}U^{2}$ & 2018\tabularnewline
$V^{2}U^{1}V^{2}U^{3}V^{2}U^{1}$ & 2780\tabularnewline
$V^{4}U^{1}V^{1}U^{2}V^{1}U^{1}$ & 3748\tabularnewline
$V^{1}U^{2}V^{1}U^{2}V^{3}U^{2}$ & 2780\tabularnewline
$V^{1}U^{1}V^{1}U^{1}V^{1}U^{4}$ & 2894\tabularnewline
$V^{1}U^{1}V^{1}U^{2}V^{1}U^{2}$ & 4591\tabularnewline
$V^{1}U^{3}V^{1}U^{1}V^{1}U^{1}$ & 3285\tabularnewline
$V^{1}U^{2}V^{1}U^{2}V^{1}U^{2}$ & 3331\tabularnewline
$V^{2}U^{2}V^{2}U^{1}V^{2}U^{2}$ & 3350\tabularnewline
$V^{1}U^{4}V^{1}U^{1}V^{4}U^{1}$ & 1756\tabularnewline
$V^{2}U^{1}V^{2}U^{4}V^{2}U^{1}$ & 2022\tabularnewline
$V^{2}U^{1}V^{1}U^{1}V^{2}U^{4}$ & 2937\tabularnewline
$V^{1}U^{2}V^{1}U^{1}V^{1}U^{1}$ & 3193\tabularnewline
$V^{1}U^{2}V^{2}U^{2}V^{1}U^{1}$ & 3680\tabularnewline
$V^{1}U^{2}V^{3}U^{2}V^{1}U^{2}$ & 2780\tabularnewline
$V^{1}U^{2}V^{1}U^{1}V^{4}U^{1}$ & 3748\tabularnewline
\hline 
\end{tabular}

\medskip{}

\caption{\label{tab:This-table-gives-evidence}This table gives evidence for
Conjecture \ref{conj:behaviour-of-PA}. The data is for $p=727$.
We have $|\protect\Y_{-2}(\protect\FF_{727})|=131587$. Recall $L(g;727)$
is the longest orbit of $g$ on $\protect\Y_{-2}(\protect\FF_{727})$.}
\end{table}

The issue of whether elements of $\SL_{2}(\Z)$ are conjugate to their
inverses shows up in several different areas of mathematics including
connect sum problems for manifolds \cite{CD}, the dynamics of kicked
toral automorphisms \cite{PR}, and the classification of foliations
of torus bundles over the circle \cite{GS}. This issue is explored
in depth in the article of Sarnak \cite{Sarnak} where it is related
to the theory of binary quadratic forms. A conjugacy class in $\PSL_{2}(\Z)$
is called \emph{primitive} if a representative is not a power of another
element. To each conjugacy class $[g]$ in $\PSL_{2}(\Z)$ one can
attach a number $t([g])=|\trace(g)|$. Let $\Pi$ denote the collection
of primitive hyperbolic conjugacy classes in $\PSL_{2}(\Z)$. It is
a result of Hejhal \cite{HEJHAL}, after Selberg \cite{SELBERGTRACE},
that one has the asymptotic formula
\[
\sum_{p\in\Pi,\:t(p)\leq X}1\approx\frac{X^{2}}{2\log X}.
\]
On the other hand, Sarnak shows in \cite{Sarnak} that if we write
$\Pi_{A}$ for the collection of primitive hyperbolic \emph{ambiguous}
conjugacy classes in $\PSL_{2}(\Z)$, then
\[
\sum_{p\in\Pi_{A},\:t(p)\leq X}1\approx\frac{97}{8\pi^{2}}X(\log X)^{2}.
\]
So the ambiguous classes are rare, with those having $t(p)\leq X$
taking up about a square root of the number of all primitive hyperbolic
classes with $t(p)\leq X.$

\subsection*{Acknowledgments}

We would like to thank Alex Gamburd, Alexei Entin, José Gonzalez,
Sam Payne, Doron Puder, Dhruv Ranganathan, Peter Sarnak, and Joseph
Silverman for enlightening discussions about this work. The first
version of this paper was written during the Summer Undergraduate
Research at Yale program, funded in part by Sam Payne's N.S.F. CAREER
award DMS\textendash 1149054.

\section{Proof of Theorem \ref{thm:containment}}

In this section we prove Theorem \ref{thm:containment}. Let $m_{i}$
denote the \emph{Markoff moves} on $\X_{-2}(\FF_{p})$, defined for
$i=1,2,3$ by
\[
m_{1}(x,y,z)=(yz-x,y,z),\:m_{2}(x,y,z)=(x,xz-y,z),\:m_{3}(x,y,z)=(x,y,xy-z)
\]
These moves together with permutations in $S_{3}$, permuting the
coordinates, generate all of $\PGL_{2}(\Z)\leq\Aut(\X_{\kappa})$.
We will prove Theorem \ref{thm:containment} by calculating the sign
of the $m_{i}$ and elements of $S_{3}$ as permutations of $\Y_{-2}(\FF_{p})$.

We begin by examining the subgroup $N$ of $\Aut(\X_{\kappa})$ as
it plays a special role in the action of $\Out(\F_{2})$ on $\X_{-2}^{*}(\FF_{p})$.
Recall from the Introduction the elements $n_{i}$ and the fact that
$\Out(\F_{2})$ permutes the $N$-orbits of $\X_{-2}^{*}(\FF_{p})$. 
\begin{lem}
\label{no (0,0,z)'s} There are no points in $\X_{-2}^{*}(\FF_{p})$
with zeroes in exactly two coordinate entries. Hence for $p>2$ all
orbits of $N$ in $\X_{-2}^{*}(\FF_{p})$ are of size 4.
\end{lem}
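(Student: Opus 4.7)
The plan is to prove the two assertions in sequence by direct substitution, using the defining equation of $\X_{-2}$ and the explicit form of the sign changes in $N$.

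For the first assertion, suppose for contradiction that some point of $\X_{-2}^{*}(\FF_{p})$ has exactly two zero coordinates. By the $S_{3}$-symmetry of the defining equation under permutation of coordinates, we may assume the point is of the form $(0,0,z)$ with $z\neq 0$. Plugging into \eqref{eq:Xkdef} with $\kappa=-2$ gives $z^{2}=0$, contradicting $z\neq 0$. This handles all three cases and establishes the first sentence.

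For the orbit size claim, recall that $N=\{1,n_{1},n_{2},n_{3}\}$, where each $n_{i}$ negates the two coordinates different from the $i$-th. The orbit of a point $P\in\X_{-2}^{*}(\FF_{p})$ under $N$ has size strictly less than $4$ precisely when $P$ is fixed by some non-identity $n_{i}$. By symmetry it suffices to treat $n_{1}$: the condition $n_{1}(x,y,z)=(x,y,z)$ reads $2y=2z=0$ in $\FF_{p}$, which for $p>2$ forces $y=z=0$. The first part of the lemma then forces $x=0$ as well, but $(0,0,0)\notin\X_{-2}^{*}(\FF_{p})$. Hence $N$ acts freely on $\X_{-2}^{*}(\FF_{p})$, so every orbit has size $|N|=4$.

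Neither step presents a genuine obstacle; the only subtlety is noting that for $p>2$ the equation $2y=0$ really does force $y=0$, which is where the hypothesis $p>2$ enters, and that we must invoke the first part of the lemma to rule out a surviving fixed point $(x,0,0)$ with $x\neq 0$.
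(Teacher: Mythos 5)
Your proof is correct and follows essentially the same route as the paper's: reduce to the case $(0,0,z)$ by symmetry, substitute into the defining equation to get $z^{2}=0$, and then deduce freeness of the $N$-action from the absence of such points (your write-up just makes explicit the fixed-point computation $2y=2z=0$ that the paper leaves implicit). No gaps.
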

\begin{proof}
By symmetry, it suffices to check that we can have no $(0,0,z)\in\X_{-2}^{*}(\FF_{p})$,
with $z\neq0$. If $x,y=0$, substituting into (\ref{eq:Xkdef}) with
$\kappa=-2$ we obtain $0+0+z^{2}=0$ which implies $z=0$. Given
the first statement of the lemma, the second follows since no points
of $\X_{-2}^{*}(\FF_{p})$ are fixed by any $n_{i}$.
\end{proof}
Due to a result of Carlitz \cite{Carlitz}, $|\X_{-2}^{*}(\FF_{p})|=p(p+3)$
when $p\equiv1\pmod4$ and $|\X_{-2}^{*}(\FF_{p})|=p(p-3)$ when $p\equiv3\pmod4$.
Thus, 
\begin{equation}
|\Y_{-2}(\FF_{p})|=\begin{cases}
\frac{1}{4}p(p+3), & \text{if }p\equiv1\pmod{4}\\
\frac{1}{4}p(p-3), & \text{if }p\equiv3\pmod{4}
\end{cases}\label{eq: size of W*(Fp)}
\end{equation}

The following fact will be useful later.
\begin{fact}
\label{no. of consecutive residues} In $\FF_{p}$, the number of
distinct pairs of consecutive quadratic residues, both nonzero, is
exactly: 
\begin{equation}
\begin{cases}
\frac{1}{4}(p-5), & \text{when }p\equiv1\pmod4\\
\frac{1}{4}(p-3), & \text{when }p\equiv3\pmod4\,.
\end{cases}\label{no. of nonzero consecutive squares}
\end{equation}
\end{fact}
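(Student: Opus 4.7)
The plan is to use the quadratic character (Legendre symbol) $\chi$ on $\FF_p$ and recognize $\frac{1+\chi(a)}{2}$ as the indicator of nonzero squares (taking the value $\tfrac12$ at $a=0$). The count we want is
\[
N = \sum_{\substack{a \in \FF_p \\ a \neq 0,\,-1}} \frac{(1+\chi(a))(1+\chi(a+1))}{4}.
\]
Expanding gives four sums: the constant sum, two linear character sums, and one quadratic sum $\sum \chi(a)\chi(a+1) = \sum \chi(a(a+1))$.

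The first three sums are immediate from the identity $\sum_{a \in \FF_p} \chi(a) = 0$, after excluding the two forbidden values $a = 0, -1$; each of them contributes either $p-2$ or a constant of size $O(1)$ involving $\chi(-1)$.

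The one nontrivial piece is $\sum_{a \neq 0,-1} \chi(a(a+1))$. For $a \neq 0$ I would factor $a(a+1) = a^2(1 + a^{-1})$, so that $\chi(a(a+1)) = \chi(1 + a^{-1})$, and substitute $u = a^{-1}$. This reduces the sum to $\sum_{u \neq 0} \chi(1+u) = -\chi(1) = -1$. (This is the standard evaluation of this particular Jacobi-type sum; it is the only spot where we actually use the multiplicativity of $\chi$ in a nontrivial way, and is the main obstacle, though a minor one.)

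Combining everything yields $N = \tfrac{1}{4}(p - 4 - \chi(-1))$. Since $\chi(-1) = 1$ when $p \equiv 1 \pmod 4$ and $\chi(-1) = -1$ when $p \equiv 3 \pmod 4$, this gives exactly $\tfrac14(p-5)$ and $\tfrac14(p-3)$ in the two cases, matching the statement.
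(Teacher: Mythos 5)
Your proof is correct: the expansion of the product of indicators is right, the two linear sums evaluate to $-\chi(-1)$ and $-1$ respectively, and the key sum $\sum_{a\neq 0}\chi(a(a+1))=\sum_{a\neq 0}\chi(1+a^{-1})=-1$ is handled properly, giving $4N=p-4-\chi(-1)$, which matches both cases of the statement. However, your route is genuinely different from the paper's. The paper does not compute a character sum at all: it cites Andrews (Theorem 10-2) for the total number of consecutive quadratic residues, obtained by counting solutions $(a,b)$ of $a^{2}-b^{2}=1$ in $\FF_{p}$ via the substitution $\alpha=a+b$, $\beta=a-b$ with $\alpha\beta=1$ (so $p-1$ solutions), passing from solutions to unordered square-values, and then discounting the boundary pairs $(0,1)$ in both cases and $(-1,0)$ when $p\equiv 1\pmod 4$. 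Your character-sum argument is self-contained, avoids the somewhat delicate bookkeeping of which solutions $(\pm a,\pm b)$ collapse to the same pair of squares, and generalizes immediately to counting pairs $(a,a+1)$ with any prescribed quadratic characters; the paper's approach is shorter on the page because the main count is outsourced to a reference, and it keeps the exposition at the level of elementary counting rather than Jacobi-type sums. Either proof is acceptable here.
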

The total number of consecutive quadratic residues is found in \cite[Theorem 10-2]{Andrews}\footnote{Count the number of the solutions $(a,b)$ to $a^{2}-b^{2}=1$ in
$\FF_{p}$. To do this, count unordered pairs $\alpha:=a+b,~\beta:=a-b$
such that $\alpha\beta=1$, then discount ones that result in the
same values of $a^{2},b^{2}$.}. We discount the pair $(0,1)$ in both cases, and $(-1,0)$ when
$p\equiv1\pmod4$.
\begin{lem}
\label{lemma: no. of fixed points of m1} Let $p$ be an odd prime.
For a given $i\in\{1,2,3\}$ 
\[
\#\{(x,y,z)\in\X_{-2}^{*}(\FF_{p})\mid m_{i}(x,y,z)=(x,y,z)\}=\left\{ \begin{matrix}p-5, & p\equiv1\pmod4\\
p-3, & p\equiv3\pmod4\,.
\end{matrix}\right.
\]
\end{lem}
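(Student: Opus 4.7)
The plan is to reduce fixed-point counting for $m_1$ on $\X_{-2}^*(\FF_p)$ to counting $\FF_p$-points on a smooth affine conic. First, since $p$ is odd, the fixed-point condition $m_1(x,y,z) = (x,y,z)$ reads $yz - x = x$, i.e. $x = yz/2$, so $x$ is determined by $(y,z)$. Substituting into the defining equation $x^2 + y^2 + z^2 = xyz$ (the case $\kappa = -2$ of \eqref{eq:Xkdef}) and clearing denominators yields the auxiliary relation $y^2 z^2 = 4(y^2 + z^2)$; conversely, any $(y,z)$ satisfying this produces a genuine point on $\X_{-2}$ via $x = yz/2$.

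Next, I would dispose of the degenerate solutions: if $y = 0$ then the relation forces $z = 0$ and hence $x = 0$, which is excluded from $\X_{-2}^*(\FF_p)$; the case $z = 0$ is symmetric. For $(y,z) \in (\FF_p^*)^2$, the change of variables $u = 2/y$, $v = 2/z$ is a bijection onto the set of $(u,v) \in (\FF_p^*)^2$ satisfying $u^2 + v^2 = 1$. Hence the number of fixed points equals the number of $\FF_p$-points of this conic with both coordinates nonzero.

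Finally, I would count $\FF_p$-points on the smooth affine conic $C : u^2 + v^2 = 1$. Its projective closure $U^2 + V^2 = W^2$ has the rational point $(1:0:1)$, so it is isomorphic to $\mathbb{P}^1_{\FF_p}$ and therefore has $p+1$ projective points. The points at infinity satisfy $U^2 + V^2 = 0$, so they are defined over $\FF_p$ precisely when $-1$ is a square modulo $p$, i.e. when $p \equiv 1 \pmod 4$, in which case there are two of them, and none otherwise. Thus $C$ has $p - 1$ affine points when $p \equiv 1 \pmod 4$ and $p + 1$ when $p \equiv 3 \pmod 4$. Subtracting the four axis points $(\pm 1, 0)$ and $(0, \pm 1)$ produces the claimed counts $p - 5$ and $p - 3$, respectively. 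By the symmetry of the defining equation in the three variables, the same count holds for $m_2$ and $m_3$.

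There is no serious obstacle here; the heart of the argument is the elementary substitution $u = 2/y$, $v = 2/z$, and the point count on $u^2 + v^2 = 1$ is classical. One could alternatively reroute through Fact~\ref{no. of consecutive residues}, after the substitution $(a,b) = (y^2,z^2)$ transforms the equation into $(a-4)(b-4) = 16$, but the conic approach is cleaner and handles both residue classes of $p$ modulo $4$ uniformly.
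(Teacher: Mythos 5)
Your proof is correct, but it takes a genuinely different route from the paper's. Both arguments begin identically: the fixed-point condition $2x=yz$ determines $x$ from $(y,z)$, substitution gives $y^{2}z^{2}=4(y^{2}+z^{2})$, and the degenerate case $y=0$ or $z=0$ collapses to the excluded origin. From there the paper sets $Y=y^{2}$, $Z=z^{2}$, rewrites the relation as $Z(Y-4)=4Y$, observes that each admissible $(Y,Z)$ lifts to exactly four pairs $(y,z)$, and reduces the count to the number of $Y$ with $Y$ and $Y-4$ both nonzero quadratic residues; this is handled by Fact~\ref{no. of consecutive residues} on consecutive residues, which the paper imports from Andrews and reuses later in the proof of Lemma~\ref{sign of (1 2)}. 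You instead apply the inversive substitution $u=2/y$, $v=2/z$, which is a bijection of $(\FF_p^{*})^{2}$ carrying the relation onto the circle $u^{2}+v^{2}=1$, and then count points on the conic directly via its projective closure ($p+1$ projective points, with $2$ or $0$ at infinity according to whether $-1$ is a square), finally discarding the four axis points. Your version is self-contained and treats both congruence classes of $p$ uniformly without invoking the external residue count; the paper's version has the mild economy of leaning on a fact it needs elsewhere anyway. Your closing remark that the paper's substitution amounts to $(Y-4)(Z-4)=16$ is also accurate.
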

\begin{proof}
We will prove this formula for $m_{1}$, and it follows for $m_{2},m_{3}$
by symmetry. We have that $m_{1}(x,y,z)=(x,y,z)$ exactly when 
\begin{equation}
2x=yz.\label{eq:2x-equals-yz}
\end{equation}

Lemma \ref{no (0,0,z)'s}, equation (\ref{eq:2x-equals-yz}), and
our assumption that $(x,y,z)\neq(0,0,0)$ imply that $x,y,z\neq0$.
Substituting $x=yz/2$ into (\ref{eq:Xkdef}) we have 
\begin{equation}
y^{2}+z^{2}-\frac{y^{2}z^{2}}{4}=0.\label{eq:yz fixed when by m1}
\end{equation}
As $x$ is uniquely determined given $y,z$ by (\ref{eq:2x-equals-yz})
we count the solutions to (\ref{eq:yz fixed when by m1}) over $\FF_{p}$
.

Letting $Y=y^{2},Z=z^{2}$ we have 
\begin{equation}
Z(Y-4)=4Y\,.\label{eq: Z,Y for fixed by m1}
\end{equation}
As $y,z\neq0$ there are exactly as many $y,z$ satisfying (\ref{eq:yz fixed when by m1})
as four times the number of solutions to (\ref{eq: Z,Y for fixed by m1}).

By (\ref{eq: Z,Y for fixed by m1}), as $Y\neq0$, $Z$ is determined
uniquely by $Y$, so we just need to count possible values of $Y\neq0$
that can satisfy (\ref{eq: Z,Y for fixed by m1}). As $Y$ and $Z$
are quadratic residues, $Y-4$ must also be. Thus to count the possible
solutions to (\ref{eq: Z,Y for fixed by m1}), we just need to count
the possible values of $Y$ such that both $Y$ and $Y-4$ are nonzero
quadratic residues. This is the case if and only if $Y/4$ and $(Y-4)/4$
are consecutive nonzero quadratic residues. By (\ref{no. of nonzero consecutive squares}),
for $p\equiv1\pmod4$ (resp. $p\equiv3\pmod4$), there are $(p-5)/4$
(resp. $(p-3)/4)$ of these. This gives us our result. 
\end{proof}
\begin{lem}
\label{Sign of m_1}Suppose $p$ is an odd prime. For a given $i\in\{1,2,3\}$,
the Markoff move $m_{i}$ acts as an even permutation on $\Y_{-2}(\FF_{p})$
exactly when $p\equiv3\pmod8$. 
\end{lem}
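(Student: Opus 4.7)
The plan is to exploit that $m_i$ is an involution: one checks $m_i^2 = \mathrm{id}$ on $\X_{-2}^{*}(\FF_p)$ by direct computation, and since $N$ is normal in $\Aut(\X_{-2})$, $m_i$ descends to an involution on $\Y_{-2}(\FF_p)$. For any involution $\sigma$ on a set of size $n$ with exactly $f$ fixed points, $\sign(\sigma) = (-1)^{(n-f)/2}$. The value of $n = |\Y_{-2}(\FF_p)|$ is already given by equation (\ref{eq: size of W*(Fp)}), so the work reduces to (i) counting $f$, the number of $N$-orbits fixed by $m_i$, and (ii) carrying out a parity calculation in each congruence class.

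By the $S_3$-symmetry permuting coordinates, it suffices to treat $i = 1$. Since $N$ acts freely on $\X_{-2}^{*}(\FF_p)$ by Lemma \ref{no (0,0,z)'s}, an $N$-orbit is fixed by $m_1$ if and only if $m_1(P) = nP$ for some (necessarily unique) $n \in N$ and any representative $P$. I would enumerate the four cases. The case $n = e$ is precisely Lemma \ref{lemma: no. of fixed points of m1}. The case $n = n_1$ forces $y = -y$ and $z = -z$, hence $y = z = 0$, which is excluded by Lemma \ref{no (0,0,z)'s}. The cases $n = n_2$ and $n = n_3$ force $z = 0$ or $y = 0$ respectively, reducing (\ref{eq:Xkdef}) to $x^2 + y^2 = 0$ or $x^2 + z^2 = 0$ with both nonzero coordinates free. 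Each equation contributes $2(p-1)$ solutions exactly when $-1$ is a square in $\FF_p$, i.e.\ when $p \equiv 1 \pmod 4$, and none otherwise. The four cases are pairwise disjoint by the free $N$-action, so summing and dividing by the orbit size $4$ yields
\[
f = \begin{cases}(5p - 9)/4 & \text{if } p \equiv 1 \pmod 4,\\ (p - 3)/4 & \text{if } p \equiv 3 \pmod 4.\end{cases}
\]

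It remains to compute the parity of $(n - f)/2$. For $p = 4k + 1$, an algebraic simplification gives $(n - f)/2 = (p^2 - 2p + 9)/8 = 2k^2 + 1$, which is always odd, so $m_1$ is odd in this case. For $p \equiv 3 \pmod 4$, $(n - f)/2 = (p - 3)(p - 1)/8$; writing $p = 8k + 3$ this becomes $2k(4k + 1)$ (even), while writing $p = 8k + 7$ it becomes $(2k + 1)(4k + 3)$ (odd). Hence $m_1$ acts as an even permutation on $\Y_{-2}(\FF_p)$ exactly when $p \equiv 3 \pmod 8$, as claimed.

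The main technical obstacle is the case analysis enumerating the $N$-orbit fixed points: one must correctly identify from each $n_j$ which coordinates must vanish, verify compatibility with the Markoff equation, and tie the existence of solutions to the quadratic character of $-1$. Once these counts are in hand, the concluding parity calculation is routine modular arithmetic, and the three congruence classes $1, 3, 7 \pmod 8$ must be examined separately to see that only $p \equiv 3 \pmod 8$ yields an even sign.
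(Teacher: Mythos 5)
Your proof is correct, and it follows the same overall strategy as the paper's: $m_i$ is an involution on $\Y_{-2}(\FF_p)$, so its sign is $(-1)^{(n-f)/2}$ where $f$ is the number of fixed $N$-orbits, and one finishes with a parity computation in each residue class mod $8$. The one place you genuinely diverge is the count of $f$, and there your version is the more careful one. The paper argues that since the $n_i$ commute with $m_1$, the number of fixed $N$-orbits is one quarter of the number of points of $\X_{-2}^{*}(\FF_p)$ fixed by $m_1$; this silently identifies setwise-fixed orbits with pointwise-fixed ones, i.e.\ it only accounts for your case $n=e$. Your cases $n=n_2,n_3$ show that this identification fails when $p\equiv1\pmod4$: for example in $\FF_5$ the point $(2,1,0)$ lies on the surface and satisfies $m_1(2,1,0)=(3,1,0)=n_2(2,1,0)$, so its $N$-orbit is a fixed point of the induced permutation without being pointwise fixed. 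Your count $f=(5p-9)/4$ for $p\equiv1\pmod4$ (versus the paper's $(p-5)/4$) is the right one; the two differ by $p-1$, which changes $r=(n-f)/2$ by the even number $(p-1)/2=2k$, so the paper's stated parity --- and hence the lemma --- survives its undercount. For $p\equiv3\pmod4$ the loci $x^2+y^2=0$ and $x^2+z^2=0$ are empty because $-1$ is a nonresidue, and the two computations agree exactly. Your individual steps all check out: the $n=n_1$ case is empty by Lemma \ref{no (0,0,z)'s}, each of the $z=0$ and $y=0$ loci contributes $2(p-1)$ points precisely when $p\equiv1\pmod4$, the four cases are disjoint because $N$ acts freely, and the concluding arithmetic in the classes $1$, $3$, $7\pmod8$ is correct.
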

\begin{proof}
We will show this result for $m_{1}$ and it follows by symmetry for
$m_{2},m_{3}$. Because it is an involution, the permutation induced
by $m_{1}$ on $\Y_{-2}(\FF_{p})$ is a product of 
\begin{equation}
r:=\frac{|\Y_{-2}(\FF_{p})|-|F|}{2}\label{r defn}
\end{equation}
disjoint transpositions, where $F$ is the set of fixed points of
$m_{1}$ in $\Y_{-2}(\FF_{p})$. Each of the $n_{i}$ commute with
$m_{1}$, so $\mathbf{x}\in\X_{-2}(\FF_{p})$ is fixed by $m_{1}$
if and only if all the elements of $N\cdot\mathbf{x}$ are fixed by
$m_{1}$. Consequently $|F|$ is exactly one fourth the number of
fixed points of $m_{1}$ in $\X_{-2}^{*}(\FF_{p})$ which we have
calculated in Lemma \ref{lemma: no. of fixed points of m1}. We also
recall from (\ref{eq: size of W*(Fp)}) the size of $\Y_{-2}(\FF_{p})$.
We calculate the parity of $m_{1}$ by calculating $r$ case by case:

If $p=4k+1$ 
\[
r=\frac{1}{2}\left(\frac{p^{2}+3p}{4}-\frac{p-5}{4}\right)=2(k^{2}+k)+1\equiv1\pmod2,
\]
so $m_{1}$ acts as an odd permutation. If $p=8k+7$ 
\[
r=\frac{1}{2}\left(\frac{p^{2}-3p}{4}-\frac{p-3}{4}\right)=8k^{2}+10k+3\equiv1\pmod2,
\]
so $m_{1}$ acts as an odd permutation. If $p=8k+3$ 
\[
r=\frac{1}{2}\left(\frac{p^{2}-3p}{4}-\frac{p-3}{4}\right)=8k^{2}+2k\equiv0\pmod2,
\]
so $m_{1}$ acts as an even permutation. 
\end{proof}
\begin{prop}
\label{M containment in An} The permutation group generated by the
action of $\langle m_{1},m_{2},m_{3}\rangle$ on $\Y_{-2}(\FF_{p})$
is contained in the alternating group on $\Y_{-2}(\FF_{p})$ if and
only if $p\equiv3\pmod8$. 
\end{prop}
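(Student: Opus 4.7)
The plan is to reduce immediately to Lemma \ref{Sign of m_1}, since that lemma has already done all the combinatorial work. The key general fact is that a subgroup $G \leq S_n$ lies in $A_n$ if and only if every element of a generating set of $G$ is an even permutation; this is because the sign map $\sign : S_n \to \{\pm 1\}$ is a homomorphism, so $G \leq A_n = \ker(\sign)$ precisely when $\sign$ vanishes on generators.

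Applying this to the generators $m_1, m_2, m_3$ of the group in question, I see that $\langle m_1, m_2, m_3\rangle \leq A_n$ if and only if each of $m_1, m_2, m_3$ acts as an even permutation on $\Y_{-2}(\FF_p)$. By Lemma \ref{Sign of m_1}, for each individual $i \in \{1,2,3\}$, $m_i$ is an even permutation on $\Y_{-2}(\FF_p)$ exactly when $p \equiv 3 \pmod 8$. Since this congruence condition is the same for all three values of $i$, the conditions are simultaneously satisfied, or simultaneously fail, precisely according to whether $p \equiv 3 \pmod 8$.

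Putting the two directions together: if $p \equiv 3 \pmod 8$, then all three generators are even, so $\langle m_1, m_2, m_3\rangle \leq A_n$; conversely, if $\langle m_1, m_2, m_3\rangle \leq A_n$, then in particular $m_1$ is even, which by Lemma \ref{Sign of m_1} forces $p \equiv 3 \pmod 8$. This completes the proof.

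There is no real obstacle here; the proposition is essentially a corollary of Lemma \ref{Sign of m_1} packaged as a statement about the subgroup rather than the individual generators. The only subtle point worth flagging explicitly is the use of the homomorphism property of $\sign$, which is why the condition on generators transfers to the whole subgroup.
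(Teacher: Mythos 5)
Your proof is correct and is exactly the argument the paper intends: the paper's own proof of this proposition is the one-line remark that it ``follows directly from Lemma \ref{Sign of m_1},'' and your write-up simply makes explicit the standard fact that a subgroup lies in $A_n$ iff all its generators are even, which is the implicit content of that remark.
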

\begin{proof}
This follows directly from Lemma \ref{Sign of m_1}. 
\end{proof}
In order to complete our proof of Theorem \ref{thm:containment},
we must check the parity of the other generators of $\PGL_{2}(\Z)$
(through which $\Out(\F_{2})$ acts). The only remaining generators
to check, aside from the Markoff moves, are those of $S_{3}$. By
Proposition \ref{M containment in An}, we know there always will
be odd permutations for $p\not\equiv3\pmod8$, so we only need to
examine the remaining case, when $p\equiv3\pmod8$.
\begin{lem}
\label{sign of (1 2)} The action of $S_{3}$ on $\Y_{-2}(\FF_{p})$
consists of even permutations when $p\equiv3\pmod{16}$. When $p\equiv11\pmod{16}$,
it consists of both even and odd permutations. 
\end{lem}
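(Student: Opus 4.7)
The plan is to exploit the symmetry of $S_3$ to reduce to a single transposition. Any $3$-cycle has order $3$ in $\Aut(\X_{\kappa})$, so it acts on $\Y_{-2}(\FF_p)$ by a permutation whose disjoint cycle decomposition contains only fixed points and $3$-cycles; hence it always acts as an even permutation. The three transpositions in $S_3$ are conjugate to one another by $3$-cycles, and so act on $\Y_{-2}(\FF_p)$ as permutations of the same parity. It therefore suffices to determine the parity of $\sigma := (1\,2) : (x,y,z) \mapsto (y,x,z)$ as a permutation of $\Y_{-2}(\FF_p)$.

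Since $\sigma$ is an involution, its parity is $(-1)^{r}$, where $r = (|\Y_{-2}(\FF_p)| - |F|)/2$ and $F \subset \Y_{-2}(\FF_p)$ is the set of $N$-orbits fixed by $\sigma$. An $N$-orbit $[x,y,z]$ lies in $F$ iff $(y,x,z) = n \cdot (x,y,z)$ for some $n \in N$. A case check on the four elements of $N$, using Lemma \ref{no (0,0,z)'s} and $p$ odd, shows this occurs exactly when the orbit contains a point of the form $(x,x,z)$ or $(x,-x,z)$ with $x \neq 0$, and that every such orbit contains precisely two points of the first type, namely $(x,x,z)$ and $(-x,-x,z)$. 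Consequently $|F|$ equals half the number of pairs $(x,z) \in \FF_p^{\times} \times \FF_p$ satisfying $z^2 - x^2 z + 2x^2 = 0$, the equation obtained by substituting $y = x$ into (\ref{eq:Xkdef}).

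I would count these using the quadratic character $\chi$ on $\FF_p$, extended by $\chi(0) = 0$. The quadratic in $z$ has discriminant $x^2(x^2 - 8)$, hence $1 + \chi(x^2 - 8)$ solutions for each $x \in \FF_p^{\times}$. Summing over $x \in \FF_p^{\times}$ and applying the standard identity $\sum_{x \in \FF_p} \chi(x^2 - 8) = -1$ (seen, for instance, by parametrizing the conic $(y-x)(y+x) = -8$, which has exactly $p-1$ points), the total is $p - 2 - \chi(-8)$. For both $p \equiv 3 \pmod{16}$ and $p \equiv 11 \pmod{16}$ one has $p \equiv 3 \pmod{8}$, so $\chi(-1) = \chi(2) = -1$ and $\chi(-8) = \chi(-1)\chi(2) = 1$. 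Therefore $|F| = (p-3)/2$ in both cases.

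Combined with $|\Y_{-2}(\FF_p)| = p(p-3)/4$ from (\ref{eq: size of W*(Fp)}), this gives $r = (p-3)(p-2)/8$. The conclusion then follows by a parity check modulo $16$: if $p \equiv 3 \pmod{16}$ then $(p-3)/8$ is even, so $r$ is even and $\sigma$ acts as an even permutation, making all of $S_3$ act by even permutations; if $p \equiv 11 \pmod{16}$ then $(p-3)/8$ is odd and $p-2$ is odd, so $r$ is odd, $\sigma$ is an odd permutation, and $S_3$ contains both parities. I expect the main obstacle to be the bookkeeping in the second paragraph, namely the case check verifying precisely which $N$-orbits are fixed by $\sigma$ and the multiplicity with which each fixed orbit is counted by the type $(x,x,z)$ solutions; once this is in place, the character sum and the final parity check are routine.
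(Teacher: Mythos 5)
Your proposal is correct and follows the paper's argument in all essentials: reduce to the single transposition $(1\,2)$, compute its parity as an involution from the count $|F|=(p-3)/2$ of fixed $N$-orbits, and check $r=(|\Y_{-2}(\FF_{p})|-|F|)/2$ modulo $2$ in the two congruence classes, arriving at the same values $2k(16k+1)$ and $32k^{2}+34k+9$. The only real difference is the bookkeeping for $|F|$: you count just the points with $y=x$ (exactly two per fixed orbit) and evaluate the quadratic character sum $\sum_{x}\chi(x^{2}-8)=-1$ directly via the conic $(y-x)(y+x)=-8$, whereas the paper counts all four points per fixed orbit and reduces to its Fact \ref{no. of consecutive residues} on consecutive nonzero quadratic residues through the substitution $w=x/\alpha$ with $\alpha^{2}=-8$; the two computations are equivalent and both give $p-3$ solutions, so your route is a clean and valid variant.
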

\begin{proof}
The group $S_{3}$ is generated by transpositions, and by symmetry
they all have the same parity, so it suffices to check the parity
of the action of the transposition $(1\,2)$ in the cases we consider.

Our strategy is to count the points in $\X_{-2}^{*}(\FF_{p})$ whose
$N$-orbits are fixed by $(1\,2)$. We start by counting how many
possible values $x$ can take on, then for each of those values we
will count how many points with fixed orbits there are.

The $N$-orbit of $(x,y,z)$ is fixed by $(1\,2)$ if and only if
\begin{equation}
(x,y,z)\in\{(y,x,z),(y,-x,-z),(-y,x,-z),(-y,-x,z)\},\label{eq: criterion for orbit fixed by (1 2)}
\end{equation}
which is if and only if $x=\pm y$. Note that by Lemma \ref{no (0,0,z)'s}
this rules out $x=0$.

Substituting $x=\pm y$ into (\ref{eq:Xkdef}) with $\kappa=-2$ we
reduce to two cases: 
\begin{equation}
x\neq0,\quad y=x,\quad2x^{2}+z^{2}=x^{2}z\text{, or}\label{eq:x,z in Markoff, when fixed by (1 2),1}
\end{equation}
\begin{equation}
x\neq0,\quad y=-x,\quad2x^{2}+z^{2}=-x^{2}z\text{ . }\label{eq:x,z in Markoff, when fixed by (1 2),2}
\end{equation}

For fixed $x$, in both cases we obtain quadratic equations in $z$
with discriminant $\Delta=x^{2}(x^{2}-8)$. Note that $\Delta\neq0$
as $x\neq0$ and 8 is not a quadratic residue of $\FF_{p}$ because
$p\equiv3\pmod8$ in the cases we consider. Thus (\ref{eq:x,z in Markoff, when fixed by (1 2),1})
and (\ref{eq:x,z in Markoff, when fixed by (1 2),2}) have solutions
over $\FF_{p}$ if and only if $\Delta$ is a square, which happens
if and only if $x^{2}-8$ is a square.

As we assume $p\equiv3\pmod8$, there exists\footnote{As $p\equiv3\pmod4$ we have that $\left(\frac{-1}{p}\right)=-1$
and as $p\equiv3\pmod8$ we have that $\left(\frac{2}{p}\right)=-1$.
This implies that $\left(\frac{-2}{p}\right)=\left(\frac{-8}{p}\right)=1$.} some $\alpha$ such that $\alpha^{2}=-8$. Setting $w:=x/\alpha$
we want to count how many values $w$ can take such that $x^{2}-8=-8(w^{2}+1)$
is a square, which we do by counting the number of nonzero consecutive
quadratic residues $w^{2}$ and $w^{2}+1$. From Fact \ref{no. of consecutive residues}
we have that there are $(p-3)/4$ such pairs of the form $(w^{2},w^{2}+1)$
where $w^{2}\neq0$ (as in both cases $p\equiv3\pmod4$). Each pair
of residues, $(w^{2},w^{2}+1)$, can be made by both $w$ and $-w$,
which gives us $(p-3)/2$ possible values of $w$ and hence of $x$.

For each valid $x$, those such that $\Delta$ is a square, we have
exactly four solutions total to (\ref{eq:x,z in Markoff, when fixed by (1 2),1})
and (\ref{eq:x,z in Markoff, when fixed by (1 2),2}) for $(x,y,z)$
that correspond to four points which satisfy both (\ref{eq:Xkdef})
and (\ref{eq: criterion for orbit fixed by (1 2)}) and thus four
points whose $N$-orbits are fixed by $(1\,2)$: 
\[
(x,x,z_{1}),(x,x,z_{2}),(x,-x,-z_{1}),(x,-x,-z_{2})
\]
\[
\text{ where }z_{1}=\frac{x^{2}+\sqrt{\Delta}}{2},\:z_{2}=\frac{x^{2}-\sqrt{\Delta}}{2}.
\]
Recall that as $\Delta\neq0$, we have that $z_{1}\neq z_{2}$, so
these four points are distinct. This gives us $2(p-3)$ points of
$\X_{-2}^{*}(\FF_{p})$ in total whose $N$-orbits are fixed by $(1\,2)$.
As each $N$-orbit in $\X_{-2}^{*}(\FF_{p})$ has exactly 4 points,
there are $\frac{p-3}{2}$ fixed $N$-orbits of $(1\,2)$.

To determine the parity of $(1\,2)$, we use the same method of counting
disjoint transpositions as we did for $m_{1}$ in the proof of Lemma
\ref{Sign of m_1}. Letting $F$ denote the fixed $N$-orbits of $(1\,2)$,
we examine the two cases:

If $p=16k+3$
\[
\frac{|\Y_{-2}(\FF_{p})|-|F|}{2}=\frac{1}{2}\left(\frac{p(p-3)}{4}-\frac{p-3}{2}\right)=2k(16k+1)\equiv0\pmod2,
\]
so $(1\,2)$ acts as an even permutation. 

If $p=16k+11$ 
\[
\frac{|\Y_{-2}(\FF_{p})|-|F|}{2}=\frac{1}{2}\left(\frac{p(p-3)}{4}-\frac{p-3}{2}\right)=32k^{2}+34k+9\equiv1\pmod2,
\]
so $(1\,2)$ acts as an odd permutation. The lemma follows directly
from this result. 
\end{proof}
Theorem \ref{thm:containment} now follows directly from Lemma \ref{sign of (1 2)}
and Proposition \ref{M containment in An}. 

\section{Lower bound on the longest orbit}

\subsection{Background on the free group.\label{subsec:Background-on-the-free-group}}

Here we give necessary background about the free group $\F_{2}$ and
its automorphisms. We write $X$ and $Y$ for the generators of $\F_{2}$.
Firstly, we always assume words in $\F_{2}$ are reduced, meaning
positive powers of $X$ do not appear beside negative powers, and
similarly for $Y$. Following \cite{PP} we make the following definition.
\begin{defn}
A word $w\in\F_{2}$ is \emph{monotone }if for each letter $X$ or
$Y$, all the exponents of this letter in $w$ have the same sign.
\end{defn}
We need the following proposition that appears in Parzanchevski and
Puder \cite[Prop. 3.5]{PP}.
\begin{prop}
\label{prop:monotone reps}Any element $\Phi$ of $\Out(\F_{2})$
has a representative in $\Aut(\F_{2})$ of the form
\begin{equation}
\hat{\Phi}:(X,Y)\mapsto(w_{1},w_{2})\label{eq:phi+exp}
\end{equation}
where $w_{1}$ and $w_{2}$ are monotone words in $\F_{2}$. 
\end{prop}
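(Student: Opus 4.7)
The plan is to pass through the abelianization isomorphism $\Out(\F_{2}) \cong \GL_{2}(\Z)$ and to exhibit a set of generators for $\GL_{2}(\Z)$ whose canonical lifts to $\Aut(\F_{2})$ are themselves monotone. I would use the Nielsen generators
\[
\alpha\colon (X, Y) \mapsto (XY, Y), \quad \beta\colon (X, Y) \mapsto (X, YX), \quad \sigma\colon (X, Y) \mapsto (Y, X),
\]
together with the sign-changes $\iota_{1}\colon X \mapsto X^{-1}$ and $\iota_{2}\colon Y \mapsto Y^{-1}$ (each the identity on the other generator). Each is manifestly monotone, and by Nielsen's theorem their abelianizations generate $\GL_{2}(\Z)$; in particular $\alpha^{\mathrm{ab}} = U$ and $\beta^{\mathrm{ab}} = V$ in the notation of Lemma~\ref{lem:UV forms}. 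Thus every $\Phi \in \Out(\F_{2})$ admits a factorization as a product of these generators and their inverses.

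The substance of the proposition is then the following closure property: the subset $\mathcal{M} \subseteq \Out(\F_{2})$ of classes admitting a monotone representative is closed under post-composition by each of the generators. For the involutive generators $\sigma, \iota_{1}, \iota_{2}$ this is immediate, since they permute or invert letters uniformly throughout any output word and therefore preserve monotonicity. For the Nielsen moves $\alpha^{\pm 1}, \beta^{\pm 1}$, I would proceed by direct analysis: given a monotone representative $\hat{\Phi}$ of $\Phi$, write $\alpha \circ \hat{\Phi}(X)$ and $\alpha \circ \hat{\Phi}(Y)$ explicitly using the substitution $X \mapsto XY$, $Y \mapsto Y$; any failure of monotonicity must arise from a free reduction at the boundary between consecutive substituted blocks. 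Such a reduction can be eliminated by replacing $\hat{\Phi}$ with an inner conjugate $\mathrm{inn}(g) \circ \hat{\Phi}$ for an explicit $g \in \F_{2}$ chosen in terms of the boundary letters, leaving the outer class $\Phi$ unchanged.

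\textbf{Main obstacle.} The technical heart is the inheritance lemma for $\alpha$ and $\beta$ — showing that any sign-mixing introduced by post-composition with a Nielsen move can be removed by conjugation by a short element of $\F_{2}$. I expect this requires a case analysis based on the leading and trailing letters of $\hat{\Phi}(X)$ and $\hat{\Phi}(Y)$, in the spirit of Whitehead's peak-reduction algorithm. Once the closure property is established, induction on the generator length of a factorization of $\Phi$ completes the proof.
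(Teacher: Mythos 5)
First, a point of comparison: the paper does not actually prove Proposition \ref{prop:monotone reps}; it is imported verbatim from Parzanchevski--Puder \cite[Prop.~3.5]{PP}, so there is no in-paper argument to measure your proposal against. Judged on its own terms, your proposal has a genuine gap at precisely the step you flag as the ``technical heart,'' and the mechanism you propose for closing it is not the right one. When you post-compose a monotone $\hat{\Phi}$ with $\alpha$, the substitution $X\mapsto XY$ inserts a positive occurrence of $Y$ after \emph{every} occurrence of $X$ in $\hat{\Phi}(X)$ and $\hat{\Phi}(Y)$. If one of these words uses $X$ with positive exponents and $Y$ with negative exponents, the sign-mixing appears in the interior of the word at each such insertion, not ``at the boundary between consecutive substituted blocks,'' and an inner conjugation --- which only alters a prefix and a suffix --- cannot remove interior mixing. (Free reduction at the block boundaries is not the source of the failure; it is the only thing that can rescue you.) The reason the closure property is nevertheless true is a nontrivial structural fact about primitive elements of $\F_{2}$: in a cyclically reduced primitive word whose $X$-exponents are positive and whose $Y$-exponents are negative, either every $X$-exponent equals $1$ or every $Y$-exponent equals $-1$ (the Nielsen/Christoffel description of primitives). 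Only with this input do all the interior conflicts cancel, e.g.\ $X^{2}Y^{-1}XY^{-1}\mapsto XYXY\cdot Y^{-1}\cdot XY\cdot Y^{-1}=XYX^{2}$; a word containing both $X^{2}$ and $Y^{-2}$ would be irreparable, and it is exactly primitivity that forbids this. Your sketch neither states nor proves this, and a case analysis on leading and trailing letters cannot substitute for it. A second unaddressed point: even when each image word is individually conjugate to a monotone word, you must repair both coordinates with a \emph{single} inner automorphism, and nothing in the sketch shows the two required conjugators can be chosen compatibly.

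A route that closes the gap and avoids the induction entirely: show that every $M\in\GL_{2}(\Z)$ factors as $D_{1}WD_{2}$ with $W$ a product of nonnegative powers of $U$ and $V$ and $D_{1},D_{2}$ signed permutation matrices. (If all entries of $M$ are nonzero, then $\det M=\pm1$ forces the product of the signs of the four entries to be positive, so row and column negations make $M$ entrywise nonnegative; a column swap makes the determinant $+1$; and a nonnegative matrix in $\SL_{2}(\Z)$ is a nonnegative word in $U,V$ by the Euclidean algorithm. Matrices with a zero entry are $\pm U^{n}$ or $\pm V^{n}$ up to signed permutations.) Lifting $W$ to the positive automorphism built from your $\alpha,\beta$ and lifting $D_{1},D_{2}$ to monomial automorphisms built from $\sigma,\iota_{1},\iota_{2}$, the composite sends each generator to a word obtained from a positive word by uniformly replacing each letter with a fixed signed letter, which is manifestly monotone. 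This is the standard argument, it dovetails with Lemma \ref{lem:UV forms}, and it replaces your unproven inheritance lemma with elementary matrix reductions.
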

In the setting of Proposition \ref{prop:monotone reps} we say that
$\hat{\Phi}$ is \emph{monotone.} Suppose $\hat{\Phi}\in\Aut(\F_{2})$
as in (\ref{eq:phi+exp}) is monotone, with 
\begin{align*}
w_{i} & =X^{\alpha_{1}^{i}}Y^{\beta_{1}^{i}}X^{\alpha_{2}^{i}}Y^{\beta_{2}^{i}}\ldots X^{\alpha_{t_{i}}^{i}}Y^{\beta_{t_{i}}^{i}}
\end{align*}
for some $\alpha_{j}^{i},\beta_{j}^{i},t_{i}\in\Z$. We identify $\Z^{2}\cong\F_{2}/[\F_{2},\F_{2}]$
by the basis induced by $X,Y$. Then $\hat{\Phi}$ acts on $\Z^{2}$
by the matrix 
\[
\left(\begin{array}{cc}
a_{1} & a_{2}\\
b_{1} & b_{2}
\end{array}\right)\in\GL_{2}(\Z)
\]
where $a_{i}=\sum_{j}\alpha_{j}^{i}$ and $b_{i}=\sum_{j}\beta_{j}^{i}$.
Moreover the values $a_{i}$ and $b_{i}$ are uniquely determined
by $\Phi\in\Out(\F_{2})$ and vice versa. We pass freely between these
representations of $\Phi$ in the rest of the paper.

\subsection{Algebraic setup }

We consider the affine scheme over $\Z[\kappa]$ 
\[
\X:=\Spec(R)
\]
where
\[
R:=\Z[\kappa,x,y,z]/I,\quad I:=(x^{2}+y^{2}+z^{2}-xyz-2-\kappa).
\]
For particular choice of $\kappa\in\Z$ we obtain a scheme over $\Z$
that we denote by 
\[
\X_{\kappa}:=\Spec(R_{\kappa}),
\]
 where
\[
R_{\kappa}:=\Z[x,y,z]/I_{\kappa},\quad I_{\kappa}:=(x^{2}+y^{2}+z^{2}-xyz-\kappa).
\]
In the case of $\kappa=-2$ one obtains the Markoff surface. The group
$\Out(\F_{2})\cong\GL_{2}(\Z)$ acts on $\X$ by automorphisms of
schemes over $\Z[\kappa]$ and for each $\kappa$, $\GL_{2}(\Z)$
acts on $\X_{\kappa}$ by automorphisms.

\subsection{The Cayley Cubic }

When $\kappa=2$, $\X_{2}$ is \emph{Cayley's cubic surface} \cite{Cayley}.\emph{
}In fact $\X_{2}$ is closely related to the split torus $\G_{m}^{2}$;
we heavily exploit this fact in the sequel. To see this, let $\tilde{\X}_{2}:=\Spec(\tilde{R}_{2})$
where
\[
\tilde{R}_{2}:=\Z[x,y,z,\delta,\eta]/J_{2},\quad J_{2}:=(x^{2}+y^{2}+z^{2}-xyz-4,\delta^{2}-x\delta+1,\eta^{2}-y\eta+1).
\]
The mapping 
\begin{eqnarray}
\tilde{R}_{2} & \to & \O_{\G_{m}^{2}}:=\Z[\delta,\delta^{*},\eta,\eta^{*}]/(\delta\delta^{*}-1,\eta\eta^{*}-1)\nonumber \\
x & \mapsto & \delta+\delta^{*}\label{eq:embed1}\\
y & \mapsto & \eta+\eta^{*}\label{eq:embed2}\\
z & \mapsto & \delta\eta+\delta^{*}\eta^{*}\label{eq:embed3}
\end{eqnarray}
and $\delta,\eta\mapsto\delta,\eta$ induces an isomorphism $\tilde{\X}_{2}\cong\G_{m}^{2}$.
The inclusion of $R_{2}\to\tilde{R}_{2}$ induces a map
\[
\G_{m}^{2}\cong\tilde{\X}_{2}\to\X_{2}.
\]
There is an action of $\GL_{2}(\Z)$ on $\G_{m}^{2}$ by
\begin{align}
g(\delta)=\delta^{a}\eta^{c}, & \quad g(\eta)=\delta^{b}\eta^{d},\nonumber \\
g(\delta^{*})=(\delta^{*})^{a}(\eta^{*})^{c}, & \quad g(\eta^{*})=(\delta^{*})^{b}(\eta^{*})^{d},\label{eq:GL_2-action}
\end{align}
for $g=\left(\begin{array}{cc}
a & b\\
c & d
\end{array}\right)\in\GL_{2}(\Z)$. We interpret $\delta^{-n}=(\delta^{*})^{n}$ for $n\in\Z$ and similarly
$\eta^{-n}=(\eta^{*})^{n}$. 

Let $\iota$ be the map $\iota:R_{2}\to\O_{\G_{m}^{2}}$ defined by
the inclusion $R_{2}\to\tilde{R}_{2}$ followed by the map $\tilde{R}_{2}\to\O_{\G_{m}^{2}}$
given by (\ref{eq:embed1}), (\ref{eq:embed2}), (\ref{eq:embed3}).
This induces a map
\[
\iota^{*}:\G_{m}^{2}\to\X_{2}.
\]

\begin{lem}
The map $\iota^{*}:\G_{m}^{2}\to\X_{2}$ is $\GL_{2}(\Z)$-equivariant. 
\end{lem}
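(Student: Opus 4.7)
The plan is to interpret $\iota^*$ representation-theoretically and reduce equivariance to the fact that diagonal representations of $\F_2$ factor through its abelianization $\F_2/[\F_2,\F_2]\cong\Z^2$. A geometric point $(\delta,\eta)$ of $\G_m^2$ parameterises the diagonal representation $\rho_{\delta,\eta}\colon\F_2\to\SL_2$ sending $X\mapsto\mathrm{diag}(\delta,\delta^{-1})$ and $Y\mapsto\mathrm{diag}(\eta,\eta^{-1})$. Under this identification the formulas (\ref{eq:embed1})--(\ref{eq:embed3}) say exactly that $\iota^*(\delta,\eta)$ is the triple of traces $(\tr\rho_{\delta,\eta}(X),\tr\rho_{\delta,\eta}(Y),\tr\rho_{\delta,\eta}(XY))$, i.e. the image of the class $[\rho_{\delta,\eta}]$ in the character variety $\X_2$.

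Next I recall that the $\Out(\F_2)$-action on $\X_2$ is by precomposition: a representative $\phi\in\Aut(\F_2)$ sends $[\rho]$ to $[\rho\circ\phi]$, which is well-defined on $\Out(\F_2)$ because traces are conjugation-invariant. Under the isomorphism $\Out(\F_2)\cong\GL_2(\Z)$ afforded by the action on $\F_2/[\F_2,\F_2]$, the matrix $g=\left(\begin{smallmatrix}a&b\\c&d\end{smallmatrix}\right)$ is represented by some $\phi$ with $\phi(X)\equiv X^{a}Y^{c}$ and $\phi(Y)\equiv X^{b}Y^{d}$ modulo $[\F_2,\F_2]$. Because $\rho_{\delta,\eta}$ is abelian, $\rho_{\delta,\eta}\circ\phi$ depends only on these abelianisations and equals the diagonal representation $\rho_{\delta^{a}\eta^{c},\,\delta^{b}\eta^{d}}$.

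Comparing with (\ref{eq:GL_2-action}), this is exactly $\rho_{g\cdot(\delta,\eta)}$. Hence on geometric points $\iota^*(g\cdot(\delta,\eta))=g\cdot\iota^*(\delta,\eta)$ in $\X_2$, with both sides given by the explicit trace triple
\[
\bigl(\delta^{a}\eta^{c}+\delta^{-a}\eta^{-c},\ \delta^{b}\eta^{d}+\delta^{-b}\eta^{-d},\ \delta^{a+b}\eta^{c+d}+\delta^{-(a+b)}\eta^{-(c+d)}\bigr).
\]
Since $R_2$ is generated over $\Z$ by $x,y,z$, agreement of the two induced ring homomorphisms $R_2\to\O_{\G_m^2}$ on these three generators is enough to conclude that they coincide, proving equivariance at the level of schemes.

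The only real obstacle is bookkeeping: one must match the left-versus-right action of $\GL_2(\Z)$, the direction of the isomorphism $\Out(\F_2)\cong\GL_2(\Z)$, and the sign/transpose conventions in the action on $\Hom(\Z^2,\G_m)$ so that the explicit formula (\ref{eq:GL_2-action}) really corresponds to precomposition by $\phi$. Once the conventions are consistently aligned, the abelianisation factorisation forces the result. As a sanity check one could instead verify equivariance by direct computation on a set of Nielsen generators of $\GL_2(\Z)$, but the conceptual route above makes plain why no surprises can arise.
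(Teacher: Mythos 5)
Your proof is correct, but it takes a genuinely different route from the paper's. The paper argues by direct computation: it notes that $\GL_{2}(\Z)$ is generated by $U$, $V$ and $(1\,2)$, records how each acts on the coordinates $(x,y,z)$ of $R_{2}$, and verifies $\iota\circ g=g\circ\iota$ on $x,y,z$ for each generator by substituting (\ref{eq:embed1})--(\ref{eq:embed3}); checking three generators is all that is needed. You instead identify $\iota^{*}$ as the trace map restricted to the locus of diagonal representations of $\F_{2}$ and observe that, since a diagonal representation factors through the abelianization $\Z^{2}$, precomposition by $\phi\in\Aut(\F_{2})$ acts on this locus exactly through the image of $\phi$ in $\GL_{2}(\Z)$, which is the torus action (\ref{eq:GL_2-action}). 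This handles all of $\GL_{2}(\Z)$ at once and explains conceptually why the formula (\ref{eq:GL_2-action}) is the right one, whereas the paper's check is shorter but opaque. Two small points to tighten: first, to get a statement of schemes over $\Z$ rather than an identity of geometric points over algebraically closed fields, run your argument with the universal diagonal representation $X\mapsto\mathrm{diag}(\delta,\delta^{*})$, $Y\mapsto\mathrm{diag}(\eta,\eta^{*})$ with entries in $\O_{\G_{m}^{2}}$, so that $\tr\rho(w)=\delta^{a}\eta^{c}+(\delta^{*})^{a}(\eta^{*})^{c}$ for $w$ with abelianization $(a,c)$ is an identity of ring elements and agreement on the generators $x,y,z$ yields $\iota\circ g=g\circ\iota$ as ring homomorphisms; second, the convention-matching you flag (left versus right action, transpose, direction of $\Out(\F_{2})\cong\GL_{2}(\Z)$) is the one place where something could silently go wrong, and the cleanest way to pin it down is precisely the paper's spot-check on a generator such as $V$.
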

\begin{proof}
Recall $U,V$ from our Introduction. The lemma can be checked by noting
that $\GL_{2}(\Z)$ is generated by $U$, $V$ and $(1\,2)$, and
these act on $R_{2}$ by
\[
U(x,y,z)=(z,y,zy-x),\quad V(x,y,z)=(x,z,xz-y),\quad(1\,2)(x,y,z)=(y,x,z).
\]
Then taking $V$ as an example, $V=\left(\begin{array}{cc}
1 & 1\\
0 & 1
\end{array}\right)$, and therefore using \eqref{eq:embed1}, \eqref{eq:embed2}, \eqref{eq:embed3}
and (\ref{eq:GL_2-action}) gives
\begin{align*}
V\circ\iota(x,y,z) & =V(\delta+\delta^{*},\eta+\eta^{*},\delta\eta+\delta^{*}\eta^{*})\\
 & =(\delta+\delta^{*},\delta\eta+\delta^{*}\eta^{*},\delta^{2}\eta+(\delta^{*})^{2}\eta^{*})=\iota(x,z,xz-y)=\iota V(x,y,z).
\end{align*}
The calculations for $U$ and $(1\,2)$ are similar. 
\end{proof}
Our current goal is to calculate the action of a given $g\in\GL_{2}(\Z)$
on $\X_{2}$. We will do this by exploiting the embedding $\iota:R_{2}\to\O_{\G_{m}^{2}}$.
It is convenient for our analysis to exclude certain edge cases, so
we make the following definition.
\begin{defn}[Good matrices]
\label{def:good}Let $g=\left(\begin{array}{cc}
a & b\\
c & d
\end{array}\right)\in\GL_{2}(\Z)$. We say $g$ is \emph{good }if $a,b,c,d\geq2$.
\end{defn}
We use the notation $O_{x}(x^{n})$ for the class of polynomials containing
terms with $x$-degree $\leq n$, that is, with no monomial summand
containing a power of $x$ greater than $n$.
\begin{prop}
\label{prop:Cayley-polys}For each coprime $a,c\in\Z$ with $a\geq2$,
$c\geq2$ there are polynomials $p_{a,c}$, $q_{a,c}\in\Z[x,y]$ with
the following properties. Assume $g=\left(\begin{array}{cc}
a & b\\
c & d
\end{array}\right)\in\GL_{2}(\Z)$ and that $g$ is good. 
\begin{enumerate}
\item We have 
\[
g(x)=p_{a,c}+q_{a,c}z\:\bmod I_{2},\quad g(y)=p_{b,d}+q_{b,d}z\:\bmod I_{2}
\]
where $x,y\in R_{2}$ are the first two coordinate functions on $\X_{2}$.
Here when we make statements that relate elements of $\Z[x,y]$ to
elements of $R_{\kappa}$ we always use the natural inclusion $\Z[x,y]\to R_{\kappa}$.
\item We have
\[
D:=\det\left(\begin{array}{cc}
p_{a,c}-x & q_{a,c}\\
p_{b,d}-y & q_{b,d}
\end{array}\right)=x^{a+b-1}D_{0}+O_{x}(x^{a+b-2})
\]
with $D_{0}\in\Z[y]$, with $D_{0}\neq0$ and monic, up to a sign,
and $\deg(D_{0})=|d-c|-1\geq0$.
\end{enumerate}

\end{prop}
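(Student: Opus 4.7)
The plan is to push the calculation onto $\O_{\G_{m}^{2}}$ via the $\GL_{2}(\Z)$-equivariant embedding $\iota\colon R_{2}\hookrightarrow\O_{\G_{m}^{2}}$. Under $\iota$, we have $g(x)=\delta^{a}\eta^{c}+\delta^{-a}\eta^{-c}=:f_{a,c}$ and $g(y)=f_{b,d}$. Let $Q_{n}(t)\in\Z[t]$ denote the rescaled Chebyshev polynomial of the second kind, defined by $Q_{0}=0$, $Q_{1}=1$, $Q_{n+1}(t)=tQ_{n}(t)-Q_{n-1}(t)$, so that $Q_{n}(\delta+\delta^{-1})=(\delta^{n}-\delta^{-n})/(\delta-\delta^{-1})$ and $Q_{n}$ is monic of $t$-degree $n-1$ for $n\geq 1$. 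The key identity to establish --- either by direct expansion in $\O_{\G_{m}^{2}}$, or by induction on $a$ using the recursion $f_{a+1,c}=xf_{a,c}-f_{a-1,c}$ --- is
\[
f_{a,c}=-Q_{a+1}(x)Q_{c-1}(y)-Q_{a-1}(x)Q_{c+1}(y)+Q_{a}(x)Q_{c}(y)\,z.
\]
Part (1) then follows by setting
\[
q_{a,c}=Q_{a}(x)Q_{c}(y),\qquad p_{a,c}=-Q_{a+1}(x)Q_{c-1}(y)-Q_{a-1}(x)Q_{c+1}(y),
\]
both manifestly in $\Z[x,y]$; these polynomials are uniquely determined because $R_{2}$ is free of rank $2$ over $\Z[x,y]$ with basis $\{1,z\}$.

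For part (2), I would expand $D=p_{a,c}q_{b,d}-p_{b,d}q_{a,c}-xq_{b,d}+yq_{a,c}$ and read off the top $x$-degree from the explicit formulas above. Since $c,d\geq 2$ ensures $Q_{c-1},Q_{d-1}\neq 0$, the polynomial $q_{a,c}$ has top $x$-degree $a-1$ with leading coefficient $Q_{c}(y)$, and $p_{a,c}$ has top $x$-degree $a$ with leading coefficient $-Q_{c-1}(y)$. The two products $p_{a,c}q_{b,d}$ and $p_{b,d}q_{a,c}$ therefore each have $x$-degree $a+b-1$, while the correction terms $xq_{b,d}$ and $yq_{a,c}$ have $x$-degrees $b$ and $a-1$ respectively, both strictly smaller because $a,b\geq 2$. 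Hence $D=x^{a+b-1}D_{0}+O_{x}(x^{a+b-2})$ with
\[
D_{0}(y)=Q_{c}(y)Q_{d-1}(y)-Q_{d}(y)Q_{c-1}(y).
\]

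The main obstacle is to identify $D_{0}$ explicitly. I would apply the Chebyshev identity $Q_{c}Q_{d-1}-Q_{d}Q_{c-1}=-Q_{c-d}$, which I would verify by clearing $(\eta-\eta^{-1})^{2}$ and checking the $4$-term identity
\[
(\eta^{c}-\eta^{-c})(\eta^{d-1}-\eta^{-(d-1)})-(\eta^{d}-\eta^{-d})(\eta^{c-1}-\eta^{-(c-1)})=-(\eta^{c-d}-\eta^{-(c-d)})(\eta-\eta^{-1})
\]
in $\Z[\eta,\eta^{-1}]$. Using $Q_{-n}=-Q_{n}$, this gives $D_{0}=\pm Q_{|d-c|}(y)$, which is monic up to a sign of $y$-degree $|d-c|-1$. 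The last point to check is that $c\neq d$: otherwise $\det g=ad-bc=c(a-b)=\pm 1$ would force $c=1$, contradicting $c\geq 2$. Hence $|d-c|\geq 1$ and $D_{0}\neq 0$, completing the proof. Aside from the Chebyshev identity above, the remainder is bookkeeping on the leading $x$-terms of the explicit formulas for $p_{a,c}$ and $q_{a,c}$.
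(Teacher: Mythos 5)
Your proof is correct and takes essentially the same route as the paper: both push the computation onto $\G_{m}^{2}$ via the equivariant embedding, write $g(x)$ and $g(y)$ as $\Z[x,y]$-linear combinations of $1$ and $z$ with Chebyshev-polynomial coefficients (your $Q_{n}(t)$ is the paper's $U_{n-1}(t/2)$), and identify $D_{0}=\pm Q_{|d-c|}(y)$ through the same product identity, using $\det g=\pm1$ to rule out $c=d$. The differences are purely notational, e.g.\ your closed form for $p_{a,c}$ versus the paper's $2T_{a}(x/2)T_{c}(y/2)-\tfrac{1}{2}xy\,U_{a-1}(x/2)U_{c-1}(y/2)$.
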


\begin{proof}
Working in $\tilde{R}_{2}$, write
\[
(x,y,z)=(\delta+\delta^{*},\eta+\eta^{*},\delta\eta+\delta^{*}\eta^{*}).
\]
It will be useful to use the notations\footnote{$c(\delta)$ should be thought of as $2\cos(\theta)$ for abstract
$\theta$ such that $\delta=\exp(i\theta)$.} $c(\delta^{n}\eta^{m}):=\delta^{n}\eta^{m}+(\delta^{*})^{n}(\eta^{*})^{m},$
and $s(\delta^{n}\eta^{m}):=\delta^{n}\eta^{m}-(\delta^{*})^{n}(\eta^{*})^{m}$,
interpreting $\delta^{-1}$ as $\delta^{*}$ as before to extend the
definitions of $c$ and $s$ to include negative powers of $\delta$
and $\eta$. Note that analogs of trigonometric formulas hold also
for these functions. 

Now, $g(x,y)$ is given by the expression
\begin{eqnarray}
(g(x),g(y)) & = & (c(\delta^{a}\eta^{c}),c(\delta^{b}\eta^{d}))\nonumber \\
 & = & \frac{1}{2}(c(\delta^{a})c(\eta^{c})+s(\delta^{a})s(\eta^{c}),c(\delta^{b})c(\eta^{d})+s(\delta^{b})s(\eta^{d})).\label{eq:groupaction1}
\end{eqnarray}
We have 
\begin{align}
c(\delta^{a}) & =2T_{a}\left(\frac{x}{2}\right),\quad c(\eta^{c})=2T_{c}\left(\frac{y}{2}\right),\label{eq:cos-Chebyshev}
\end{align}
 where for $a\geq0$, $T_{a}\in\Z[t]$ is the Chebyshev polynomial
of the first kind. Although we work in $\Z[\frac{1}{2}]\otimes\tilde{R}_{2}$
throughout the proof, our final results will hold in $R_{2}$. Similarly
for $a,c\geq2$ 
\begin{eqnarray}
s(\delta^{a}) & = & U_{a-1}\left(\frac{x}{2}\right)s(\delta),\quad s(\eta^{c})=U_{c-1}\left(\frac{y}{2}\right)s(\eta),\label{eq:sine-chebyshev}
\end{eqnarray}
 where $U_{a}\in\Z[t]$ is the Chebyshev polynomial of the second
kind. 

Using this we obtain from (\ref{eq:groupaction1}) and (\ref{eq:cos-Chebyshev}),
(\ref{eq:sine-chebyshev}) the expression
\[
(g(x),g(y))=(P_{a,c}(x,y,z),P_{b,d}(x,y,z)),
\]
where
\[
P_{a,c}(x,y,z):=2T_{a}\left(\frac{x}{2}\right)T_{c}\left(\frac{y}{2}\right)+\frac{1}{2}U_{a-1}\left(\frac{x}{2}\right)U_{c-1}\left(\frac{y}{2}\right)(2z-xy).
\]
To obtain this expression, we used that $s(\delta)s(\eta)=2z-xy$.
The key point is that $P_{a,c}(x,y,z)$ is linear in $z,$ and we
obtain Part 1 of the proposition with 
\begin{eqnarray*}
p_{a,c}(x,y) & := & 2T_{a}\left(\frac{x}{2}\right)T_{c}\left(\frac{y}{2}\right)-\frac{1}{2}xyU_{a-1}\left(\frac{x}{2}\right)U_{c-1}\left(\frac{y}{2}\right),\\
q_{a,c}(x,y) & := & U_{a-1}\left(\frac{x}{2}\right)U_{c-1}\left(\frac{y}{2}\right).
\end{eqnarray*}
Using that $2T_{a}\left(\frac{t}{2}\right)$ and $U_{a-1}\left(\frac{t}{2}\right)$
are monic in $t$ for $a\geq1$ of degrees $a$ and $a-1$ respectively,
we get that the leading $x$-degree contribution to $p_{a,c}$ is
$x^{a}u_{c}$ where
\begin{align}
u_{c}(y): & =T_{c}\left(\frac{y}{2}\right)-\frac{y}{2}U_{c-1}\left(\frac{y}{2}\right)=-U_{c-2}\left(\frac{y}{2}\right).\label{eq:good-expression}
\end{align}
The last equality uses the sum of angle formula for sine together
with the connection between Chebyshev polynomials and trigonometric
functions. The leading $x$-degree contribution to $q_{a,c}$ is more
easily seen to be $x^{a-1}v_{c}$ where 
\[
v_{c}(y):=U_{c-1}\left(\frac{y}{2}\right).
\]
This concludes our calculations for the pair $a,c$. Since $g$ is
good, we have $b,d\geq2$ and so the calculation of $P_{b,d}$ and
$p_{b,d}$, $q_{b,d}$ is analogous to the preceding one, replacing
$a,c\mapsto b,d$. 

\textbf{Calculation of $D$ and $D_{0}$. }Note that since $g$ is
good, we must have $c\neq d$. Indeed, if $c=d$ then from the determinant
of $g$ being $\pm1$, one sees that $c$ and $d$ are coprime, which
cannot happen since $c=d\geq2$. Since $a\geq2$, the $-x$ term in
the determinant does not contribute to the largest $x$-degree term.
We get
\begin{align*}
D & =x^{a+b-1}\left(u_{c}v_{d}-v_{c}u_{d}\right)\\
 & =x^{a+b-1}\left(-U_{c-2}\left(\frac{y}{2}\right)U_{d-1}\left(\frac{y}{2}\right)+U_{d-2}\left(\frac{y}{2}\right)U_{c-1}\left(\frac{y}{2}\right)\right)+O_{x}(x^{a+b-2})\\
 & =x^{a+b-1}\sign(d-c)U_{|d-c|-1}\left(\frac{y}{2}\right)+O_{x}(x^{a+b-2}),
\end{align*}
where one can use sum of angle formulas for sine to get the final
identity.
\end{proof}

\subsection{The deformation from $\kappa=2$.}

It is well known since work of Fricke \cite{FRICKEKLEIN} that to
each $w\in\F_{2}$ the induced \emph{word map $w:\SL_{2}(\C)\times\SL_{2}(\C)\to\SL_{2}(\C)$
}has 
\[
\tr(w(A,B))=P_{w}(x,y,z)
\]
for unique $P_{w}\in\Z[x,y,z]$, where $x=\tr(A),y=\tr(B),z=\tr(AB)$.
Indeed this follows from repeated applications of the identity
\begin{equation}
\tr(uv)=\tr(u)\tr(v)-\tr(u^{-1}v),\quad u,v\in\SL_{2}(\C).\label{eq:UV-mutation}
\end{equation}
If $\theta\in\Aut(\F_{2})$ acts by $\theta(X,Y)=(w_{1}(X,Y),w_{2}(X,Y))$
then $\theta$ acts on the coordinate functions $x,y\in R$ by
\[
\theta(x)=P_{w_{1}}(x,y,z),\quad\theta(y)=P_{w_{2}}(x,y,z),\quad P_{w_{i}}\in\Z[x,y,z].
\]

Define the $(x,z)$-degree of a monomial $x^{\alpha}y^{\beta}z^{\gamma}\kappa^{\delta}$
to be $\alpha+\gamma$, and define the $(x,z)$-degree of a polynomial
$f$ in $\Z[\kappa,x,y,z]$ to be the maximum of the $(x,z)$-degrees
of the monomials with nonzero coefficients in $f$. We write $f^{(N)}$
for the $(x,z)$-degree $N$ piece of $f$, that is, the part comprised
of monomials of $(x,z)$-degree $N$.
\begin{lem}
\label{lem:xz-degree-word-map}Write $X,Y$ for fixed generators of
$\F_{2}$. Let 
\[
w=X^{\alpha_{1}}Y^{\beta_{1}}X^{\alpha_{2}}Y^{\beta_{2}}\ldots X^{\alpha_{t}}Y^{\beta_{t}}
\]
 be a monotone word, with every $\alpha_{i},\beta_{i}\neq0$. Let
$a=\sum_{i=1}^{t}\alpha_{i}$ and $b=\sum_{i=1}^{t}\beta_{i}$. The
$(x,z)$-degree of $P_{w}$ is $\leq|a|$.
\end{lem}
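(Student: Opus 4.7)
My plan is to prove this by strong induction on $\ell_X(w) := \sum_{i=1}^{t}|\alpha_i|$, the total number of $X^{\pm 1}$ letters, which equals $|a|$ precisely when $w$ is monotone in $X$. In fact I would prove the slightly stronger statement that the $(x,z)$-degree of $P_w$ is at most $\ell_X(w)$ for any reduced word in $\F_2$ that is monotone in $X$ alone, because applying the Fricke identity \eqref{eq:UV-mutation} can destroy $Y$-monotonicity while preserving $X$-monotonicity, and this wider hypothesis is what makes the induction close.

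The base case $\ell_X(w) = 0$ is immediate, since then $w \in \langle Y \rangle$ and $P_w \in \Z[y]$. For the inductive step, using cyclic invariance of the trace, I rotate $w$ so that it begins with $X^{\alpha_1}$, let $\sigma := \sign(\alpha_1)$, and split into two cases depending on $|\alpha_1|$. If $|\alpha_1|\ge 2$, I write $w = X^\sigma \cdot v$ with $v := X^{\alpha_1-\sigma}Y^{\beta_1}\cdots X^{\alpha_t}Y^{\beta_t}$ and apply \eqref{eq:UV-mutation} to obtain
\[
P_w \;=\; x \cdot P_v \;-\; P_{X^{\alpha_1 - 2\sigma}Y^{\beta_1}\cdots X^{\alpha_t}Y^{\beta_t}}.
\]
Both subwords remain monotone in $X$, with $\ell_X$ equal to $\ell_X(w)-1$ and $\ell_X(w)-2$ respectively, so the inductive hypothesis together with subadditivity of $(x,z)$-degree under multiplication yields the bound.

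If instead $|\alpha_1| = 1$, the naive decomposition $w = X^\sigma\cdot (Y^{\beta_1}X^{\alpha_2}\cdots)$ produces the term $P_{X^{-\sigma}Y^{\beta_1}X^{\alpha_2}\cdots}$, whose word has $\ell_X$ equal to $\ell_X(w)$ and is no longer $X$-monotone, stalling the induction. The key trick is to absorb $Y^{\beta_1}$ into the first factor, setting $u := X^\sigma Y^{\beta_1}$ and $v := X^{\alpha_2}Y^{\beta_2}\cdots X^{\alpha_t}Y^{\beta_t}$, so that
\[
P_w \;=\; P_{X^\sigma Y^{\beta_1}}\cdot P_v \;-\; P_{Y^{-\beta_1}X^{\alpha_2-\sigma}Y^{\beta_2}\cdots X^{\alpha_t}Y^{\beta_t}}.
\]
The cancellation $X^{-\sigma}X^{\alpha_2} = X^{\alpha_2-\sigma}$ genuinely uses $X$-monotonicity (so $\sign\alpha_2 = \sigma$), and the resulting word is still $X$-monotone with $\ell_X$ equal to $\ell_X(w)-2$. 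A direct Chebyshev computation using $B^{\beta} = U_{\beta-1}(y/2)B - U_{\beta-2}(y/2)I$ gives $P_{X^\sigma Y^{\beta_1}} = U_{\beta_1-1}(y/2)\,z - U_{\beta_1-2}(y/2)\,x$, which has $(x,z)$-degree $1$; this handles both this case and the residual base case $t = 1$, $|a| = 1$.

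The main obstacle is precisely the $|\alpha_1| = 1$ case: the most obvious recursion does not reduce $\ell_X$, and produces a leading $X^{-\sigma}$ opposite in sign to the remaining $X$-exponents, wrecking $X$-monotonicity. The two-letter decomposition above is what simultaneously preserves $X$-monotonicity and achieves the two-step decrease in $\ell_X$ needed to close the induction. Everything else is routine bookkeeping with the Fricke relation.
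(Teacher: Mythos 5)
Your proof is correct, and it reorganizes the induction in a way that genuinely differs from the paper's. The paper assumes all $\alpha_i,\beta_i>0$ (sufficient for its application to good matrices) and inducts on a partial order that decrements \emph{both} the $X$- and $Y$-exponents via \eqref{eq:UV-mutation}; because it also shrinks the $\beta_i$, its minimal elements are exactly the cyclically reduced words $(XY)^n$, which are dispatched by $P_{(XY)^n}=2T_n(z/2)$. This sidesteps entirely the $|\alpha_i|=1$ difficulty you identify: the paper never peels a single $X$ off a block of exponent $1$. Your argument instead inducts only on $\ell_X(w)$, which forces you to (i) strengthen the hypothesis to words monotone in $X$ alone, since $u^{-1}v$ loses $Y$-monotonicity, and (ii) introduce the two-letter peel $u=X^{\sigma}Y^{\beta_1}$ to get the two-step drop in $\ell_X$; both of these are handled correctly, and the bookkeeping ($\ell_X(v)=\ell_X(w)-1$, $\ell_X(u^{-1}v)=\ell_X(w)-2$, preservation of $X$-monotonicity, subadditivity of the $(x,z)$-degree) checks out. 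What you gain is a statement valid for arbitrary signs of the exponents (the paper only gestures at this, noting positivity suffices for its application) and a cleaner isolation of the fact that only the $X$-structure of $w$ controls the $(x,z)$-degree; what the paper gains is brevity and a single clean base case. One small imprecision: your closed form $P_{X^{\sigma}Y^{\beta}}=U_{\beta-1}(y/2)\,z-U_{\beta-2}(y/2)\,x$ is only literally correct for $\sigma=+1$ and $\beta>0$; for $\sigma=-1$ one has $\tr(A^{-1}B)=xy-z$ in place of $z$, and negative $\beta$ shifts the Chebyshev indices. In every case the $(x,z)$-degree of $P_{X^{\sigma}Y^{\beta}}$ is still $1$, which is all you use, so the argument stands.
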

\begin{proof}
Assume for ease of exposition that all $\alpha_{i},\beta_{i}$ are
positive, so $a,b>0$. This will be the case for words arising from
good elements of $\GL_{2}(\Z)$. The proof is by induction on the
partial order $\preceq$ defined by the following moves:
\begin{itemize}
\item If any $\alpha_{i}$ has $\alpha_{i}\geq2$ then $w',w''\preceq w$
for either $w',w''$ obtained by replacing $\alpha_{i}\mapsto\alpha_{i}-1$
or $\alpha_{i}\mapsto\alpha_{i}-2$. Then (\ref{eq:UV-mutation})
yields
\[
P_{w}(x,y,z)=xP_{w'}(x,y,z)-P_{w''}(x,y,z).
\]
Note if the lemma holds for $P_{w'}$ and $P_{w''}$, it holds for
$P_{w}$.
\item If any $\beta_{i}$ has $\beta_{i}\geq2$ then we perform the replacements
$\beta_{i}\mapsto\beta_{i}-1$ or $\beta_{i}\mapsto\beta_{i}-2$ to
form $w',w''$ and declare $w',w''\preceq w$. By the same logic as
before, $P_{w}(x,y,z)=yP_{w'}(x,y,z)-P_{w''}(x,y,z)$ so if the lemma
holds for $w'$ and $w''$ it holds for $w$.
\item We identify all words with their cyclically reduced conjugates. This
doesn't change $P_{w}$.
\end{itemize}
To put this all together, note that any minimal cyclically reduced
word with respect to $\preceq$ has all the $\alpha_{i}=\beta_{i}=1$.
If all the $\alpha_{i}$ and $\beta_{i}$ are $1$, and $w$ is cyclically
reduced, then $w$ is a power of $XY$ or $YX$ and e.g. if $w=(XY)^{n}$
then $a=n$. On the other hand, $P_{(XY)^{n}}(x,y,z)=2T_{n}\left(\frac{z}{2}\right)$
has $(x,z)$-degree $n$ as required (this also shows the statement
of the lemma is sharp).
\end{proof}
Our next goal is to show, in the present context, that $P_{w}$ are
equal in $R$ to functions that are linear in $z$ and such that certain
terms have no dependence on $\kappa$.
\begin{lem}
\label{lem:Degree-control}If $\theta\in\Aut(\F_{2})$ satisfies $\theta(X,Y)=(w_{1}(X,Y),w_{2}(X,Y))$,
then 
\[
P_{w_{1}}(x,y,z)=U_{w_{1}}+V_{w_{1}}z\:\bmod I,\quad P_{w_{2}}(x,y,z)=U_{w_{2}}+V_{w_{2}}z\:\bmod I
\]
where $U_{w_{i}},V_{w_{i}}\in\Z[\kappa,x,y]$ have the following property.
If $N_{i}$ is at least the $(x,z)$-degree of $P_{w_{i}}$ then 
\begin{enumerate}
\item $U_{w_{i}}=x^{N_{i}}U_{w_{i}}^{0}+O_{x}(x^{N_{i}-1})$ with $U_{w_{i}}^{0}\in\Z[y]$.
\item $V_{w_{i}}=x^{N_{i}-1}V_{w_{i}}^{0}+O_{x}(x^{N_{i}-2})$ with $V_{w_{i}}^{0}\in\Z[y]$. 
\end{enumerate}
In particular, $U_{w_{i}}^{0}$ and $V_{w_{i}}^{0}$ do not depend
on $\kappa$.
\end{lem}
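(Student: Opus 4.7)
The plan is to work inside $R = \Z[\kappa,x,y,z]/I$, where the defining relation gives $z^2 \equiv xyz - x^2 - y^2 + 2 + \kappa \pmod{I}$. Iteratively substituting this for occurrences of $z^2$ in any polynomial representative eliminates all powers of $z$ beyond the first. Since $z$ satisfies a monic quadratic over $\Z[\kappa,x,y]$, each element of $R$ has a unique linear-in-$z$ representative $U + Vz$ with $U, V \in \Z[\kappa,x,y]$; applied to $P_{w_i}$ this yields the first assertion.

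The substantive step is to track how this reduction interacts with the $(x,z)$-degree filtration. Applied to a monomial $x^a y^b z^c \kappa^d$ with $c \geq 2$ (which has $(x,z)$-degree $a+c$), the substitution produces
\[
x^{a+1}y^{b+1}z^{c-1}\kappa^d - x^{a+2}y^b z^{c-2}\kappa^d - x^a y^{b+2}z^{c-2}\kappa^d + 2 x^a y^b z^{c-2}\kappa^d + x^a y^b z^{c-2}\kappa^{d+1}.
\]
The first two summands again have $(x,z)$-degree $a+c$ and carry the same factor $\kappa^d$; the remaining three have $(x,z)$-degree $a+c-2$. In particular, the $(x,z)$-degree-preserving portion of the substitution is $\kappa$-free, so the only way that new $\kappa$ factors can be introduced is in the $(x,z)$-degree-dropping portion, which strictly lowers the $(x,z)$-degree by $2$.

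Applying this observation inductively to $P_{w_i}$, which belongs to $\Z[x,y,z]$ (no $\kappa$) and has $(x,z)$-degree at most $N_i$ by Lemma \ref{lem:xz-degree-word-map}, every monomial of the reduced form $U_{w_i} + V_{w_i}z$ that involves $\kappa^e$ with $e \geq 1$ must have $(x,z)$-degree at most $N_i - 2e \leq N_i - 2$. Consequently the $(x,z)$-degree-$N_i$ piece of $U_{w_i} + V_{w_i}z$ is $\kappa$-free. Decomposing this piece according to $z$-degree, $U_{w_i}$ contributes $x^{N_i}$ times a polynomial in $y$ alone, while $V_{w_i}z$ contributes $x^{N_i-1}z$ times a polynomial in $y$ alone; this gives the claimed forms in (1) and (2), with $U^0_{w_i}, V^0_{w_i} \in \Z[y]$.

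The main obstacle is the bookkeeping in the inductive reduction. One must check that the order in which the $z^2$-substitutions are applied does not affect the final leading coefficients, which is automatic from uniqueness of the $\{1,z\}$-basis decomposition of $R$ over $\Z[\kappa,x,y]$; and one must verify that $\kappa$ really accumulates only in strictly lower $(x,z)$-degree. Once this filtration is in place, the $\kappa$-independence of the leading coefficients falls out immediately from the fact that the $(x,z)$-degree-$2$ part of $xyz - x^2 - y^2 + 2 + \kappa$, namely $xyz - x^2$, involves no $\kappa$.
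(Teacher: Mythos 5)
Your proof is correct and follows essentially the same route as the paper: both reduce $P_{w_i}$ modulo $I$ by repeatedly substituting $z^{2}\mapsto xyz-x^{2}-y^{2}+2+\kappa$ and observe that the $(x,z)$-degree never increases while the only $\kappa$-introducing terms drop the $(x,z)$-degree by $2$, so the top-degree piece stays $\kappa$-free. Your explicit monomial-by-monomial expansion and the appeal to the freeness of $R$ over $\Z[\kappa,x,y]$ with basis $\{1,z\}$ are just slightly more detailed versions of the same bookkeeping the paper carries out.
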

\begin{proof}
Transform $P_{w_{1}}(x,y,z)$ by replacing each monomial of the form
$x^{\alpha}y^{\beta}z^{\gamma}$ with $\gamma\geq2$ by 
\begin{equation}
x^{\alpha}y^{\beta}z^{\gamma}\mapsto x^{\alpha}y^{\beta}z^{\gamma-2}(xyz-x^{2}-y^{2}+2+\kappa),\label{eq:replacement1}
\end{equation}
these two terms are equal $\bmod I$. Moreover this replacement has
the following properties: if $p,q\in\Z[\kappa,x,y,z]$ and $p\mapsto q$
in this manner then
\begin{itemize}
\item The $(x,z)$-degree of $q$ is at most the $(x,z)$-degree of $p$.
\item Let $N_{1}$ be at least the $(x,z)$-degree of $p$ and let $p^{(N_{1})}$
be the $(x,z)$-degree $N_{1}$ component of $p$ and similarly define
$q^{(N_{1})}$. If $N_{1}$ is larger than the $(x,z)$-degree of
$p$ then $p^{(N_{1})}$ is zero. If $p^{(N_{1})}\in\Z[x,y,z]$ then
$q^{(N_{1})}\in\Z[x,y,z]$ (so doesn't depend on $\kappa$). This
follows since $q^{(N_{1})}$ is obtained from $p^{(N_{1})}$ by replacement
of all monomials of the form $x^{\alpha}y^{\beta}z^{\gamma}$ with
$\gamma\geq2$ by
\[
x^{\alpha}y^{\beta}z^{\gamma}\mapsto x^{\alpha}y^{\beta}z^{\gamma-2}(xyz-x^{2})=x^{\alpha+1}y^{\beta}z^{\gamma-2}(yz-x).
\]
Monomials $x^{\alpha}y^{\beta}z^{\gamma}$ with $\gamma\leq1$ are
left unaltered. 
\end{itemize}
The effect of iterating this reduction, beginning with the fact that
$P_{w_{1}}\in\Z[x,y,z]$, yields polynomials $U_{w_{1}},V_{w_{1}}\in\Z[\kappa,x,y]$
such that $P_{w_{1}}=U_{w_{1}}+V_{w_{1}}z\bmod I$, the $(x,z)$-degree
of $U_{w_{1}}+V_{w_{1}}z$ is $\leq N_{1}$, and $(U_{w_{1}}+V_{w_{1}}z)^{(N_{1})}\in\Z[x,y,z]$.
This means that the $x$-degree of $U_{w_{1}}$ is $\leq N_{1}$ and
$U_{w_{1}}^{(N_{1})}\in\Z[x,y]$. Similarly the $x$-degree of $V_{w_{1}}$
is $\leq N_{1}-1$ and $V_{w_{1}}^{(N_{1}-1)}\in\Z[x,y].$ Performing
this reduction also for $P_{w_{2}}$ with $N_{2}$ in place of $N_{1}$
establishes the result.
\end{proof}
\begin{prop}
\label{prop:fixed-point-determinant}For each coprime $a,c\in\Z$
with $a\geq2$, $c\geq2$ there are polynomials $\tilde{p}_{a,c}$,
$\tilde{q}_{a,c}\in\Z[x,y]$ with the following properties. Assume
$g=\left(\begin{array}{cc}
a & b\\
c & d
\end{array}\right)\in\GL_{2}(\Z)$ and that $g$ is good. 
\begin{enumerate}
\item We have
\[
g(x)=\tilde{p}_{a,c}+\tilde{q}_{a,c}z\:\bmod I,\quad g(y)=\tilde{p}_{b,d}+\tilde{q}_{b,d}z\:\bmod I.
\]
\item Moreover, 
\[
\tilde{D}:=\det\left(\begin{array}{cc}
\tilde{p}_{a,c}-x & \tilde{q}_{a,c}\\
\tilde{p}_{b,d}-y & \tilde{q}_{b,d}
\end{array}\right)\in\Z[\kappa,x,y]
\]
is given by
\[
\tilde{D}=x^{a+b-1}D_{0}+O_{x}(x^{a+b-2})
\]
where $D_{0}\in\Z[y]$ is the same quantity as in Proposition \ref{prop:Cayley-polys},
for the same $g$.
\end{enumerate}
\end{prop}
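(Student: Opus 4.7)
The plan is to apply Lemma \ref{lem:Degree-control} to a monotone automorphism representing $g$ (supplied by Proposition \ref{prop:monotone reps}) and then pin down the $\kappa$-free top $x$-degree coefficients by specializing to $\kappa=2$ and comparing with Proposition \ref{prop:Cayley-polys}.

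First, via Proposition \ref{prop:monotone reps}, I would pick a monotone lift $\hat{\Phi}\in\Aut(\F_{2})$ of $g$ and write $\hat{\Phi}(X)=w_{1}$, $\hat{\Phi}(Y)=w_{2}$. Since $g$ is good, all exponents in $w_{1},w_{2}$ are positive; the abelianization of $w_{1}$ is $(a,c)$ and that of $w_{2}$ is $(b,d)$. By Lemma \ref{lem:xz-degree-word-map}, the $(x,z)$-degree of $P_{w_{1}}$ is at most $a$ and that of $P_{w_{2}}$ is at most $b$. Plugging these bounds into Lemma \ref{lem:Degree-control} with $N_{1}=a$ and $N_{2}=b$ produces polynomials $U_{w_{i}},V_{w_{i}}\in\Z[\kappa,x,y]$ with
\[
g(x)\equiv U_{w_{1}}+V_{w_{1}}z,\qquad g(y)\equiv U_{w_{2}}+V_{w_{2}}z\pmod{I},
\]
with $U_{w_{i}}=x^{N_{i}}U_{w_{i}}^{0}+O_{x}(x^{N_{i}-1})$ and $V_{w_{i}}=x^{N_{i}-1}V_{w_{i}}^{0}+O_{x}(x^{N_{i}-2})$, where the four leading coefficients lie in $\Z[y]$ and are independent of $\kappa$. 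I set $\tilde{p}_{a,c}:=U_{w_{1}}$, $\tilde{q}_{a,c}:=V_{w_{1}}$, and analogously for $(b,d)$; a short argument using $\Out(\F_{2})\cong\GL_{2}(\Z)$ together with conjugation-invariance of trace polynomials shows that this pair only depends on the abelianization $(a,c)$ of $w_{1}$.

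The next and most delicate step is to identify $U_{w_{i}}^{0}$ and $V_{w_{i}}^{0}$ with the explicit Chebyshev expressions $u_{c}(y)=-U_{c-2}(y/2)$ and $v_{c}(y)=U_{c-1}(y/2)$ appearing in the proof of Proposition \ref{prop:Cayley-polys}. Because the generator of $I_{\kappa}$ is quadratic in $z$, any element of $I_{\kappa}$ of $z$-degree $\leq 1$ must vanish identically, so the linear-in-$z$ representative of a class modulo $I_{\kappa}$ is unique. Specializing to $\kappa=2$ therefore identifies $U_{w_{1}}|_{\kappa=2}$ with $p_{a,c}$ and $V_{w_{1}}|_{\kappa=2}$ with $q_{a,c}$ from Proposition \ref{prop:Cayley-polys}; since Lemma \ref{lem:Degree-control} guarantees that the top $x$-degree pieces $U_{w_{i}}^{0},V_{w_{i}}^{0}$ are $\kappa$-free, they must agree with those of $p_{a,c}$ and $q_{a,c}$, namely the stated Chebyshev polynomials.

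The remainder is degree bookkeeping. Because $g$ is good, $a,b\geq 2$, so subtracting $x$ from $\tilde{p}_{a,c}$ (respectively $y$ from $\tilde{p}_{b,d}$) does not disturb the top $x$-degree term. Expanding the determinant
\[
\tilde{D}=(\tilde{p}_{a,c}-x)\tilde{q}_{b,d}-(\tilde{p}_{b,d}-y)\tilde{q}_{a,c}
\]
yields
\[
\tilde{D}=x^{a+b-1}\bigl(U_{w_{1}}^{0}V_{w_{2}}^{0}-U_{w_{2}}^{0}V_{w_{1}}^{0}\bigr)+O_{x}(x^{a+b-2}),
\]
and by the previous paragraph the leading coefficient is exactly the combination $u_{c}v_{d}-u_{d}v_{c}$ computed in the proof of Proposition \ref{prop:Cayley-polys}, hence equals the claimed $D_{0}$. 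The step I expect to require the most care is the uniqueness-plus-specialization identification of the $\kappa$-independent leading pieces with their Cayley counterparts; once this bridge is in place the rest is driven by degree tracking and the fact that $g$ is good.
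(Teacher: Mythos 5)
Your proposal is correct and follows essentially the same route as the paper: monotone representative via Proposition \ref{prop:monotone reps}, the $(x,z)$-degree bound of Lemma \ref{lem:xz-degree-word-map} fed into Lemma \ref{lem:Degree-control}, then specialization $\kappa\mapsto 2$ combined with the observation that the only element of $I_{2}$ linear in $z$ is $0$ to match the leading, $\kappa$-free coefficients against Proposition \ref{prop:Cayley-polys}. The only cosmetic difference is that you identify the individual leading coefficients $U_{w_{i}}^{0},V_{w_{i}}^{0}$ with the Chebyshev expressions before expanding the determinant, whereas the paper performs the matching at the level of $\tilde{D}$ itself; both are justified by the same uniqueness argument.
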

\begin{proof}
Let $\hat{\Phi}\in\Aut(\F_{2})$ be a monotone automorphism representing
$g$, given by Proposition \ref{prop:monotone reps}. We consider
$\Phi(x)=g(x)$, the calculation of $\Phi(y)$ is similar. Let $w_{1}$
and $w_{2}$ be the monotone words appearing in the expression (\ref{eq:phi+exp})
for $\hat{\Phi}$. We have $\Phi(x)=P_{w_{1}}(x,y,z)$. This has $(x,z)$-degree
$\leq a$ by Lemma \ref{lem:xz-degree-word-map}. Note that since
we know $w_{1}$ is monotone, we can conjugate $w_{1}$ to be of the
form as in Lemma \ref{lem:xz-degree-word-map} without changing $a$
or $P_{w_{1}}(x,y,z)$.

Applying Lemma \ref{lem:Degree-control} with $N_{1}=a$ we can write
\[
\Phi(x)=U_{w_{1}}^{0}x^{a}+U'_{w_{1}}+(V_{w_{1}}^{0}x^{a-1}+V'_{w_{1}})z,
\]
where $U_{w_{1}}^{0},V_{w_{1}}^{0}\in\Z[y]$, $U'_{w_{1}}\in\Z[\kappa,x,y]$
has $x$-degree $\leq a-1$ and $V'_{w_{1}}\in\Z[\kappa,x,y]$ has
$x$-degree $\leq a-2$. We obtain the first part of the proposition
with
\begin{equation}
\tilde{p}_{a,c}:=U_{w_{1}}^{0}x^{a}+U'_{w_{1}},\quad\tilde{q}_{a,c}:=V_{w_{1}}^{0}x^{a-1}+V'_{w_{1}}.\label{eq:tilde_defs}
\end{equation}
Similarly $\tilde{p}_{c,d}$ and $\tilde{q}_{c,d}$ are obtained by
replacing $w_{1}$ by $w_{2}$ and $a,c\mapsto b,d$. Note at this
moment we do not know that $U_{w_{1}}^{0}$ and $U_{w_{2}}^{0}$ are
non-zero.

Let $\pi$ be the evaluation map $\Z[\kappa,x,y,z]\to\Z[x,y,z]$ sending
$\kappa\mapsto2$. We must have in $R_{2}$
\begin{equation}
\pi\left(\tilde{p}_{a,c}+\tilde{q}_{a,c}z\right)\equiv\pi(\tilde{p}_{a,c})+\pi(\tilde{q}_{a,c})z\equiv p_{a,c}+q_{a,c}z\bmod I_{2}\label{eq:equialtiy-of-expressions}
\end{equation}
where $p_{a,b}$ and $q_{a,b}$ are the polynomials from Proposition
\ref{prop:Cayley-polys}. This is because they both describe how $g$
maps the coordinate function $x$. In the other hand, since the left
and right hand sides of (\ref{eq:equialtiy-of-expressions}) differ
by a function that is linear in $z$, this difference must be zero
since $0$ is the only element of $I_{2}$ that is linear in $z$.
So the identity (\ref{eq:equialtiy-of-expressions}) actually holds
in $\Z[x,y,z]$. This means $\pi(\tilde{p}_{a,c})=p_{a,c}$, $\pi(\tilde{q}_{a,c})=\pi(q_{a,c})$
and the same replacing $a,c\mapsto b,d$. 

This implies, if $D$ is the quantity obtained in Proposition \ref{prop:Cayley-polys},
that
\[
\pi(\tilde{D})=D.
\]
From (\ref{eq:tilde_defs}) we have
\[
\tilde{D}=x^{a+b-1}(U_{w_{1}}^{0}V_{w_{2}}^{0}-U_{w_{2}}^{0}V_{w_{1}}^{0})+O_{x}(x^{a+b-2}).
\]
Since the $x^{a+b-1}$ coefficient of $\tilde{D}$ doesn't depend
on $\kappa$, and equals $D_{0}$ when evaluated at $\kappa=2$, it
must be equal to $D_{0}$. This completes the proof.
\end{proof}

\subsection{Proof of Theorem \ref{thm:long-cycles-uniform}.}

Fix $\kappa\in\Z$. Our proof relies on proving that reasonably small
powers of $g$ have few fixed points. The following lemma combines
our previous estimates with a variant of the Schwartz-Zippel Lemma
\cite{Schwartz,Zippel}.
\begin{lem}
\label{lem:few-fixed-points}Let $g=\left(\begin{array}{cc}
a & b\\
c & d
\end{array}\right)\in\GL_{2}(\Z)$ with $|a|,|b|,|c|,|d|\geq2$, then for any $\kappa\in\Z$, $g$ has
fewer than $2p(||d|-|c||+|a|+|b|)$ fixed points in $\X_{\kappa}(\FF_{p}).$
\end{lem}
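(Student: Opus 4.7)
The plan is to convert the fixed-point condition into a polynomial equation in $(x,y)$ via the linearity in $z$ of $g(x)$ and $g(y)$ modulo $I$, then bound its zeros in $\FF_p^2$ by a Schwartz--Zippel-style count, and lift each projection to at most two $z$'s via the defining equation of $\X_\kappa$.

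The first step is to extend Proposition \ref{prop:fixed-point-determinant} from the good case ($a,b,c,d\geq 2$) to the hypothesis $|a|,|b|,|c|,|d|\geq 2$. The Cayley calculation in Proposition \ref{prop:Cayley-polys} adapts using $c(\delta^n)=2T_{|n|}(x/2)$ and $s(\delta^n)=\sign(n)\,U_{|n|-1}(x/2)\,s(\delta)$. Writing $\epsilon_1:=\sign(ac)$ and $\epsilon_2:=\sign(bd)$, the $x^{|a|+|b|-1}$-coefficient of the resulting determinant becomes $\epsilon_2 T_{|c|}(y/2)U_{|d|-1}(y/2)-\epsilon_1 U_{|c|-1}(y/2)T_{|d|}(y/2)$. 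The key observation is that $|a|,|b|,|c|,|d|\geq 2$ together with $ad-bc=\pm 1$ forces $ad$ and $bc$ to share a sign (else $|ad-bc|\geq 8$), so $\epsilon_1=\epsilon_2$; combined with the Chebyshev identity $T_nU_{m-1}-U_{n-1}T_m=\sign(m-n)U_{|m-n|-1}$ and the fact that $\gcd(|c|,|d|)=1$ with $|c|,|d|\geq 2$ forces $|c|\neq|d|$, this coefficient reduces to $\pm U_{||d|-|c||-1}(y/2)$, a nonzero polynomial in $y$ of degree $||d|-|c||-1\geq 0$. The $\kappa$-deformation argument of Proposition \ref{prop:fixed-point-determinant} then runs verbatim and yields $\tilde p_{a,c},\tilde q_{a,c},\tilde p_{b,d},\tilde q_{b,d}\in\Z[\kappa,x,y]$ with $g(x)\equiv\tilde p_{a,c}+\tilde q_{a,c}z$ and $g(y)\equiv\tilde p_{b,d}+\tilde q_{b,d}z$ modulo $I$, and
\[
\tilde D:=\det\left(\begin{array}{cc}\tilde p_{a,c}-x & \tilde q_{a,c}\\ \tilde p_{b,d}-y & \tilde q_{b,d}\end{array}\right)=D_0(y)\,x^{|a|+|b|-1}+O_x(x^{|a|+|b|-2}),
\]
with $\deg_y D_0=||d|-|c||-1$.

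Any fixed point $(x_0,y_0,z_0)\in\X_\kappa(\FF_p)$ satisfies two linear-in-$z_0$ equations whose compatibility gives $\tilde D(x_0,y_0)=0$ (this includes the degenerate case where both $\tilde q$'s vanish at $(x_0,y_0)$, since then $\tilde D$ vanishes automatically). I would split the count of projections by whether $D_0(y_0)$ vanishes: for the at most $p$ values $y_0\in\FF_p$ with $D_0(y_0)\neq 0$, the polynomial $\tilde D(x,y_0)$ has $x$-degree exactly $|a|+|b|-1$ and so at most $|a|+|b|-1$ roots; for the at most $||d|-|c||-1$ values of $y_0$ with $D_0(y_0)=0$, trivially at most $p$ roots in $x$. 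This gives at most $p(|a|+|b|+||d|-|c||-2)$ fixed-point projections $(x_0,y_0)$, and each has at most $2$ lifts $z_0\in\FF_p$ coming from the quadratic $z^2-x_0y_0z+(x_0^2+y_0^2-2-\kappa)=0$. Thus the total number of fixed points is at most $2p(|a|+|b|+||d|-|c||-2)<2p(|a|+|b|+||d|-|c||)$.

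The main obstacle is the extension of Proposition \ref{prop:fixed-point-determinant} beyond the good case, which is precisely where the full hypothesis $|a|,|b|,|c|,|d|\geq 2$ (not just nonzero) enters: via the sign-coincidence $\epsilon_1=\epsilon_2$ and the separation $|c|\neq|d|$ guaranteed by coprimality.
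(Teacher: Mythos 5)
Your proof is correct, and its endgame --- the two linear-in-$z$ equations, the vanishing of the determinant $\tilde D$ at the projection of any fixed point, the count of at most $p(|a|+|b|-1)+p(||d|-|c||-1)$ zeros of $\tilde D$ in $\FF_p^2$ split according to whether the leading coefficient $D_0$ vanishes at $y_0$, and the at most two lifts in $z$ --- coincides with the paper's. Where you genuinely diverge is in the treatment of matrices that are not good. The paper disposes of this in two lines: conjugate by $\left(\begin{array}{cc}1&0\\0&-1\end{array}\right)$ to arrange $ab>0$, multiply by $-I$ to arrange $a>0$, and claim the result is good; these operations preserve the fixed-point count and the bound, so Proposition \ref{prop:fixed-point-determinant} applies as stated. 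You instead rerun the Chebyshev computation with explicit signs, using $s(\delta^{n})=\sign(n)U_{|n|-1}(x/2)s(\delta)$, and show that the leading coefficient of the determinant is $\epsilon_2 T_{|c|}U_{|d|-1}-\epsilon_1 U_{|c|-1}T_{|d|}$ with $\epsilon_1=\sign(ac)$, $\epsilon_2=\sign(bd)$; your observations that $\det g=\pm1$ forces $\epsilon_1=\epsilon_2$ (since otherwise $|ad-bc|\geq 8$) and forces $|c|\neq|d|$ (by coprimality) then recover $D_0=\pm U_{||d|-|c||-1}(y/2)$ exactly as in the good case. Your route costs more computation but buys robustness: the paper's reduction, as written, does not actually reach every matrix allowed by the hypothesis --- e.g.\ $\left(\begin{array}{cc}3&5\\-2&-3\end{array}\right)$ has all entries of modulus $\geq 2$ and determinant $1$, but has trace $0$ and so is conjugate to no good matrix at all, and neither of the two stated operations fixes its sign pattern. (This does not affect the application to Theorem \ref{thm:long-cycles-uniform}, where only large-trace powers of a hyperbolic element are used, but it does mean your sign-tracking argument covers cases the paper's reduction misses.) Your final constant $2p(|a|+|b|+||d|-|c||-2)$ is also slightly sharper than the stated bound.
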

\begin{proof}
First if $ab<0$ then conjugating by $\left(\begin{array}{cc}
1 & 0\\
0 & -1
\end{array}\right)$ gives a new matrix which has $ab>0$. Now if $a<0$, multiplying
by $-I$ gives a new matrix with $a,b,c,d\geq2$, i.e. the resulting
matrix is good. These operations do not change the conjugacy class
of the matrix in $\PGL_{2}(\Z)$, therefore the number of fixed points
on $\X_{\kappa}(\FF_{p})$, and neither do they change the quantity
$2p(||d|-|c||+|a|+|b|)$. So we may assume without loss of generality
that $g$ is good.

In this proof we distinguish a specific fixed value $\kappa_{0}$
from the generic parameter $\kappa$ of $\Z[\kappa,x,y]$. Let $\tilde{p}_{a,c},\tilde{q}_{a,c}$
be the polynomials from Proposition \ref{prop:fixed-point-determinant},
and let $p_{a,c}^{\kappa_{0}}$, $q_{a,c}^{\kappa_{0}}$ be the images
of $\tilde{p}_{a,c}$, $\tilde{q}_{a,c}$ under the evaluation map
\[
\pi_{\kappa_{0}}:\Z[\kappa,x,y]\to\Z[x,y],\quad\kappa\mapsto\kappa_{0}.
\]
If $(X,Y,Z)\in\X_{\kappa_{0}}(\FF_{p})$ is a fixed point of $g$,
then from Proposition \ref{prop:fixed-point-determinant} we know
$p_{a,c}^{\kappa_{0}}(X,Y)+q_{a,c}^{\kappa_{0}}(X,Y)Z=X$ and $p_{b,d}^{\kappa_{0}}(X,Y)+q_{b,d}^{\kappa_{0}}(X,Y)Z=Y$
so
\begin{equation}
\left(\begin{array}{cc}
p_{a,c}^{\kappa_{0}}(X,Y)-X & q_{a,c}^{\kappa_{0}}(X,Y)\\
p_{b,d}^{\kappa_{0}}(X,Y)-Y & q_{b,d}^{\kappa_{0}}(X,Y)
\end{array}\right)\left(\begin{array}{c}
1\\
Z
\end{array}\right)\equiv\left(\begin{array}{c}
0\\
0
\end{array}\right)\bmod p.\label{eq:det}
\end{equation}
In particular, the determinant $D^{\kappa_{0}}\in\Z[x,y]$ of this
matrix must be zero when evaluated at $(X,Y)\in\FF_{p}^{2}$. But,
recalling Proposition \ref{prop:fixed-point-determinant} and its
notation,
\[
D^{\kappa_{0}}=\pi_{\kappa_{0}}(D)=\pi_{\kappa_{0}}\left(x^{a+b-1}D_{0}+O_{x}(x^{a+b-2})\right)=x^{a+b-1}D_{0}+O_{x}(x^{a+b-2})
\]
by using that $D_{0}\in\Z[y]$. 

Proposition \ref{prop:fixed-point-determinant} tells us that for
$Y\in\FF_{p}$ with $D_{0}(Y)\neq0$, the polynomial in $\FF_{p}[x]$
obtained by evaluating $D^{\kappa_{0}}$ at $y=Y$ has degree $a+b+1$.
So recalling from Proposition \ref{prop:Cayley-polys} that $D_{0}$
is monic up to a sign with degree $|d-c|-1$, there are at most
\[
p.(||d|-|c||-1)+p(a+b+1)=p(||d|-|b||+|a|+|c|)
\]
pairs $(X,Y)\in\FF_{p}^{2}$ for which (\ref{eq:det}) can hold. On
the other hand, since $X^{2}+Y^{2}+Z^{2}=XYZ+2+\kappa_{0}$, given
$X,Y$ for which (\ref{eq:det}) holds, there are at most two possible
$Z$ with $(X,Y,Z)\in\X_{\kappa_{0}}(\FF_{p})$.
\end{proof}
\begin{proof}[Proof of Theorem \ref{thm:long-cycles-uniform}]

Given hyperbolic $g$ in $\GL_{2}(\Z)$, we consider powers $g^{n}$
of this element. Let $\lambda$ be the eigenvalue of $g$ of largest
modulus. Diagonalizing $g$ we have

\[
g^{n}=\left(\begin{array}{cc}
Q_{11}(\lambda^{n},\lambda^{-n}) & Q_{12}(\lambda^{n},\lambda^{-n})\\
Q_{21}(\lambda^{n},\lambda^{-n}) & Q_{22}(\lambda^{n},\lambda^{-n})
\end{array}\right)
\]
where the $Q_{ij}$ are quadratic forms depending on $g$. It is possible
to check that since $g$ is hyperbolic, all the coefficients of $g^{n}$
are unbounded as $n\to\infty$ in the sense that for all $M>0$, there
is $N(M)$ such that when $n>N(M)$, $|(g^{n})_{ij}|>M$ for all $1\leq i,j\leq2$.

Indeed, if $g$ is hyperbolic it cannot fix $[1;0]$ or $[0;1]$ in
the action of $\GL_{2}(\Z)$ on $P^{1}(\R)$. So $g$ has an attracting
fixed point $z_{+}$ in $P^{1}(\R)$ that is distinct from than $[1;0]$
and $[0;1]$. Of course the same is true for the transpose $g^{T}$.
This means, projectively, $g^{n}$ converges to a matrix with all
entries nonzero. Since $\GL_{2}(\Z)$ is discrete, at least one entry
of $g^{n}$ is unbounded, hence all the entries are.

Note this implies that for $n\geq n_{0}(g)$, $g^{n}$ satisfies the
hypothesis of Lemma \ref{lem:few-fixed-points}. Noting that there
is $C=C(g)$ such that all coefficients of $g^{n}$ are $\leq C\lambda^{n}$,
Lemma \ref{lem:few-fixed-points} gives that $g^{n}$ has fewer than
$8Cp|\lambda|^{n}$ fixed points on $\X_{\kappa}(\FF_{p})$ when $n\geq n_{0}(g)$.

We also need a bound on the number of fixed points of $g^{n}$ when
$n<n_{0}(g)$. In this case, we have that $g^{n\lceil n_{0}(g)/n\rceil}$
satisfies the hypothesis of Lemma \ref{lem:few-fixed-points}, so
it has fewer than $Mp$ fixed points, where
\[
M=8\max\{\:|(g^{n\lceil n_{0}(g)/n\rceil})_{i,j}|\::\:1\leq i,j\leq2,\:,\:1\leq n<n_{0}(g)\:\}.
\]
But any fixed point of $g^{n}$ gives rise to a fixed point of $g^{n\lceil n_{0}(g)/n\rceil}$
so this means $g^{n}$ has fewer than $Mp$ fixed points.

For given $N$, this implies that the number of points in $\X_{\kappa}(\FF_{p})$
fixed by any $g^{n}$ with $n\leq N$ is 
\[
\leq\sum_{n<n_{0}}Mp+\sum_{n_{0}\leq n\leq N}8Cp|\lambda|^{n}\leq n_{0}Mp+C'p|\lambda|^{N}.
\]
 for $C'=C'(g)>0$. We have $|\X_{\kappa}(\FF_{p})|\leq cp^{2}$ with
$c$ depending only on the complexity of $\X_{\kappa}$ viewed as
a variety over $\FF_{p}$, hence independent of $\kappa$. This follows
from the Lang-Weil bound \cite[Lemma 1]{LW}, and also from direct
consideration of (\ref{eq:Xkdef}). Therefore if $n_{0}Mp+C'p|\lambda|^{N}<cp^{2}$
then there exists a point in $\X_{\kappa}(\FF_{p})$ not fixed by
$g^{n}$ for any $n\leq N$. Hence there is a cycle of $g$ of length
$\geq N$ where 
\[
N\approx\frac{\log\left(\frac{cp}{2C'}-\frac{n_{0}M}{C'}\right)}{\log|\lambda|}=\frac{\log p}{\log|\lambda|}+O_{g}(1).
\]
\end{proof}

\section{Algebraic Characterization of Cyclic Palindromes\label{sec:Algebraic-Characterization-of}}

In this section, we use that $\PSL_{2}(\Z)\cong\Z/2\Z*\Z/3\Z$ with
the generators of the cyclic factors given by
\[
S=\left(\begin{array}{cc}
0 & 1\\
-1 & 0
\end{array}\right),\quad R=\left(\begin{array}{cc}
0 & 1\\
-1 & -1
\end{array}\right).
\]
Here $R=ST$ where $T=\left(\begin{array}{cc}
1 & 1\\
0 & 1
\end{array}\right)$. With this presentation, every conjugacy class in $S$ has a representative
of the form either $g=R^{y}$, $g=S$, or
\begin{equation}
g=SR^{y_{1}}\ldots SR^{y_{k}}\label{eq:cyclically-reduced}
\end{equation}
 with $y_{i}\in\{1,2\}$ for $1\leq i\leq k$. However, powers of
$S$ and $R$ are not hyperbolic, so every hyperbolic conjugacy class
has a representative as in (\ref{eq:cyclically-reduced}). Moreover,
a representative of this form has unique sequence $y_{1},\ldots,y_{k}$,
up to cyclic rotation. We write $[y_{1},\ldots,y_{k}]$ for the cyclic
equivalence class of this sequence.

\begin{proof}[Proof of Lemma \ref{lem:UV forms}]
Note that in $\PSL_{2}(\Z)$,
\begin{equation}
SR=\left(\begin{array}{cc}
-1 & -1\\
0 & -1
\end{array}\right)=V,\quad SR^{2}=\left(\begin{array}{cc}
1 & 0\\
1 & 1
\end{array}\right)=U,\label{eq:UVsubs}
\end{equation}
and substituting this into (\ref{eq:cyclically-reduced}) proves Lemma
\ref{lem:UV forms}.
\end{proof}
\begin{proof}[Proof of Proposition \ref{prop:two-defs}]
Suppose $g=U^{n_{1}}V^{m_{1}}\ldots U^{n_{l}}V^{m_{l}}$ is a hyperbolic
reduced $UV$-word. Also suppose $g$ is given by (\ref{eq:cyclically-reduced}).
Then $g^{-1}$ is conjugate in $\PSL_{2}(\Z)$ to
\begin{equation}
SR^{(1-y_{k})}SR^{(1-y_{k-1})}\ldots SR^{(1-y_{1})}.\label{eq:exp1}
\end{equation}
The action of $\PGL_{2}(\Z)$ on conjugacy classes in $\PSL_{2}(\Z)$
is generated by
\[
w=\left(\begin{array}{cc}
0 & 1\\
1 & 0
\end{array}\right).
\]
We calculate
\[
wSw^{-1}=S,\quad wRw^{-1}=R^{2}.
\]
Therefore with $g$ as in (\ref{eq:cyclically-reduced}), we have
\begin{equation}
wgw^{-1}=SR^{(1-y_{1})}S\ldots SR^{(1-y_{k})}.\label{eq:exp2}
\end{equation}
which is conjugate in $\PSL_{2}(\Z)$ to $g^{-1}$. Then comparing
(\ref{eq:exp1}) and (\ref{eq:exp2}) we have that $g$ is ambiguous
if and only if $[(1-y_{1}),(1-y_{2}),\ldots,(1-y_{k})]=[(1-y_{k}),(1-y_{k-1})\ldots(1-y_{1})]$
which is if and only if $[y_{1},y_{2},\ldots,y_{k}]=[y_{k},\ldots,y_{1}]$,
and it is easy to see, using the substitutions (\ref{eq:UVsubs}),
that this happens if and only if the reduced $UV$-word giving $g$
is a cyclic palindrome.
\end{proof}

\begin{multicols}{2}
\noindent Alois Cerbu, \\ Department of Mathematics,\\ Yale University, \\ New Haven CT 06511, U.S.A. \\ {\tt alois.cerbu@yale.edu}\\ \\
\noindent Elijah Gunther, \\ Department of Mathematics,\\ Yale University, \\ New Haven CT 06511, U.S.A. \\ {\tt elijah.gunther@yale.edu}\\ \\

\columnbreak
\noindent Michael Magee, \\Department of Mathematical Sciences, \\ Durham University,\\  Durham DH1 3LE, U.K. \\  {\tt michael.r.magee@durham.ac.uk}\\ \\
\noindent Luke Peilen, \\ Department of Mathematics,\\ Yale University, \\ New Haven CT 06511, U.S.A. \\ {\tt luke.peilen@yale.edu}\\ \\
\end{multicols}
\end{document}